\theoremstyle{definition}%non italic environment
\newtheorem*{theoA}{Theorem A}
\newtheorem*{theoB}{Theorem B}
\newtheorem*{theoC}{Theorem C}
\newtheorem*{theoD}{Theorem D}
\newtheorem*{theoE}{Theorem E}
\newtheorem*{theoF}{Theorem F}
\newtheorem{theo}{Theorem}[section]
\newtheorem{lem}{Lemma}[section]
\newtheorem{exm}{Example}[section]
\newtheorem{defi}{Definition}[section]
\newtheorem{question}{Question}[section]
\newcommand{\be}{\begin{equation}}
\newcommand{\ee}{\end{equation}}
\newcommand{\beas}{\begin{eqnarray*}}
\newcommand{\eeas}{\end{eqnarray*}}
\newcommand{\bea}{\begin{eqnarray}}
\newcommand{\eea}{\end{eqnarray}}
\numberwithin{equation}{section}
\begin{document}
\title[Periodicity of Transcendental Entire Functions Sharing Set with their Shifts]{Periodicity of Transcendental Entire Functions Sharing Set with their Shifts}
\date{}
\author[S. Roy and R. Sinha]{Soumon Roy$^{1}$ and  Ritam Sinha$^{2}$}
\date{}
\address{$^{1}$ \textbf{Soumon Roy}, Nevannlina Lab, Ramakrishna Mission Vivekananda Centenary College, Rahara,
West Bengal 700 118, India.}
\email{rsoumon@gmail.com}
\address{$^{2}$ \textbf{Ritam Sinha},  Nevannlina Lab, Ramakrishna Mission Vivekananda Centenary College, Rahara, West Bengal 700 118, India.}
\email{sinharitam23@gmail.com}
\maketitle
\let\thefootnote\relax
\footnotetext{2020 AMS Mathematics Subject Classification: 30D35}
\footnotetext{Key words and phrases: Hyper order, transcendental, entire function, Shift}
\begin{abstract}
This paper aims to study the periodicity of a transcendental entire function of hyper-order less than one. For a transcendental entire function of hyper order less than one and a non-zero complex constant $c$, $\mathfrak{f} (z) \equiv \mathfrak{f} (z + c)$ if they share a certain set with weight two.
\end{abstract}

\section{Introduction}
The main tool we used to prove our main results is value distribution  theory \cite{05}.\\
Let $E$ denote any set of positive real numbers with finite Lebesgue measure, where $E$ may vary in each occurrence. We use $M(\mathbb{C})$ to denote the field of all meromorphic functions defined in $\mathbb{C}$. Let $\mathfrak{f}\in M(\mathbb{C})$ and $ S\subset \mathbb{C} \cup \{\infty\} $ be a non-empty set with distinct elements. We set
$$ E_{\mathfrak{f}}(S) = \bigcup_{a \in S} \{z:\mathfrak{f}(z)-a=0\},$$ where a zero of $\mathfrak{f}-a$ with multiplicity $m$ counts $m$ times in $ E_{\mathfrak{f}}(S)$. Let $ \overline{E}_{\mathfrak{f}}(S)$ denote the collection of distinct elements in $ E_{\mathfrak{f}}(S)$.\\
Let $\mathfrak{g}\in M(\mathbb{C})$. We say that two function $\mathfrak{f}$ and $\mathfrak{g}$ share the set $S$ CM (res. IM) if $ E_{\mathfrak{f}}(S)=  E_{\mathfrak{g}}(S)$ (resp., $ \overline{E}_{\mathfrak{f}}(S) =  \overline{E}_{\mathfrak{g}}(S)$).\\
The order and the hyper-order of $\mathfrak{f}$, respectively denoted by $\rho(\mathfrak{f})$ and $\rho_2(\mathfrak{f})$, are defined as
$$\rho_(\mathfrak{f}):=\limsup_{r\to\infty}\frac{\log T(r,f)}{\log r},~~\text{and}~~\rho_{2}(\mathfrak{f}):=\limsup_{r\to\infty}\frac{\log \log T(r,f)}{\log r}.$$
For any non-constant meromorphic function $\mathfrak{f}$, we denote by $S(r,\mathfrak{f})$, any quantity satisfying $$ S(r,\mathfrak{f})=o(T(r,\mathfrak{f}))~~~\text{as}~~ r \rightarrow \infty, r \notin E. $$
By \emph{shift} of $\mathfrak{f}(z)$, we mean $\mathfrak{f}(z+c)$, where $c$ is a non-zero complex constant. Now, we recall some well-known definitions from the literature.
\begin{defi} (\cite{14})
Let $ a\in \mathbb{C} \cup \{\infty\}$. For a positive integer $k$, we define
\begin{enumerate}
\item[(i)] $N_{k)}(r,a; \mathfrak{f})$ as the counting function of $a$- points of $\mathfrak{f}$ with multiplicity $\leq k$,\\
\item[(ii)] $N_{(k}(r,a; \mathfrak{f})$ as the counting function of $a$- points of $\mathfrak{f}$ with multiplicity $\geq k$.
\end{enumerate}
\end{defi}
Similarly, the reduced counting function $\overline{N}_{k)}(r,a; \mathfrak{f})$ and $\overline{N}_{(k}(r,a; \mathfrak{f})$ are defined.
\begin{defi}(\cite{08})
Let $k\in \mathbb{N}$. We define $N_{k}(r,0; \mathfrak{f})$ as the counting function of zeros of $\mathfrak{f}$, where a zero of $\mathfrak{f}$ with multiplicity $q$ is counted $q$ times if $ q\leq k$, and is counted $k$ times if $q>k$.
\end{defi}
\begin{defi} (\cite{04})
A non-constant monic polynomial $P(z)$ is said to be a strong uniqueness polynomial for meromorphic function, if $P(\mathfrak{f})\equiv \alpha P(\mathfrak{g})$ implies $\mathfrak{f} \equiv \mathfrak{g}$ for any two non-constant meromorphic functions $\mathfrak{f}$ and $\mathfrak{g}$ and $\alpha$ is a non-zero constant.
\end{defi}
In 1977, Rubel and Yang \cite{17} first investigated the uniqueness of an entire function concerning its first derivatives and proved the following result:
\begin{theoA}
Let $\mathfrak{f}$ be a non-constant entire function and $a$, $b$ be two distinct finite complex values. If $ \mathfrak{f}(z)$ and $ \mathfrak{f'}(z)$ share $a$, $b$ CM, then $\mathfrak{f}(z) \equiv \mathfrak{f'}(z)$.
\end{theoA}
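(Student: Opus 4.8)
The plan is to use the CM-sharing to trade $\mathfrak f,\mathfrak f'$ for two zero-free entire functions and reduce everything to an identity between them. Since $\mathfrak f$ is entire, neither $\mathfrak f$ nor $\mathfrak f'$ has poles, and because $\mathfrak f$ and $\mathfrak f'$ share $a,b$ CM the quotients
\[
u:=\frac{\mathfrak f'-a}{\mathfrak f-a},\qquad v:=\frac{\mathfrak f'-b}{\mathfrak f-b}
\]
are entire and nowhere zero, so I may write $u=e^{\alpha}$ and $v=e^{\beta}$ for entire $\alpha,\beta$. From $\mathfrak f'-a=(\mathfrak f-a)u$ and $\mathfrak f'-b=(\mathfrak f-b)v$ one obtains the master identity $(\mathfrak f-a)u-(\mathfrak f-b)v=b-a$, and, whenever $u\not\equiv v$,
\[
\mathfrak f-a=\frac{(a-b)(v-1)}{u-v},\qquad \mathfrak f-b=\frac{(a-b)(u-1)}{u-v},\qquad \mathfrak f'-\mathfrak f=\frac{(a-b)(u-1)(v-1)}{u-v}.
\]
The last expression is the hinge of the proof: it shows that, in this case, the desired conclusion $\mathfrak f'\equiv\mathfrak f$ is equivalent to $u\equiv1$ or $v\equiv1$; moreover $u\equiv1\Leftrightarrow\mathfrak f'\equiv\mathfrak f\Leftrightarrow v\equiv1$.

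I would first dispose of the alignment $u\equiv v$: the master identity then reads $(b-a)u=b-a$, whence $u\equiv v\equiv1$ and $\mathfrak f'\equiv\mathfrak f$ immediately. There remains the case $u\not\equiv v$. Since either of $u\equiv1$ or $v\equiv1$ would force $u\equiv v$, in this case one automatically has $u\not\equiv1$ and $v\not\equiv1$, so $\mathfrak f'-\mathfrak f\not\equiv0$, and the objective becomes to contradict the non-constancy of $\mathfrak f$. To that end I would differentiate $\mathfrak f-a=(a-b)(v-1)/(u-v)$ and insert the result into $\mathfrak f'=a+(\mathfrak f-a)u$, using $u'=\alpha'u$ and $v'=\beta'v$; clearing the denominator $(u-v)^2$ produces a single differential-algebraic identity
\[
uv(v-u)+(1-k)u^2-kv^2+(2k-1+\beta'-\alpha')\,uv+\alpha'u-\beta'v=0,\qquad k:=\frac{a}{a-b},
\]
which ties the two zero-free functions $u,v$ together through only the coefficient pair $\alpha',\beta'$.

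The decisive step is the Nevanlinna-theoretic reading of this identity, and it is where I expect the genuine difficulty. The lemma on the logarithmic derivative gives $m(r,\alpha')=m(r,u'/u)=S(r,\mathfrak f)$ and $m(r,\beta')=S(r,\mathfrak f)$ (here $T(r,u),T(r,v)=O(T(r,\mathfrak f))$, so all error terms are honest $S(r,\mathfrak f)$), so that $\alpha',\beta'$ enter only as small coefficients against the large functions $u,v$. Applying the second main theorem to $u=e^{\alpha}$ and $v=e^{\beta}$ — each omitting $0$ and $\infty$, so that $T(r,u)\le\overline{N}(r,1;u)+S(r,u)$ and likewise for $v$ — and comparing characteristic functions, I expect the identity to be sustainable only when $\alpha'\equiv\beta'\equiv0$, i.e. when $u$ and $v$ are constants; but then $\mathfrak f-a=(a-b)(v-1)/(u-v)$ is constant, contradicting the hypothesis that $\mathfrak f$ is non-constant. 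This empties the case $u\not\equiv v$ and leaves $\mathfrak f'\equiv\mathfrak f$ as the only possibility. The main obstacle is precisely this balancing: controlling the mutual cancellation of the leading cubic terms $uv^2$ and $u^2v$ and proving that the small coefficients $\alpha',\beta'$ cannot compensate them unless they vanish, which is where the careful bookkeeping of proximity and counting functions must be carried out.
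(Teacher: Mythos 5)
The paper does not actually prove Theorem A: it is quoted from Rubel and Yang \cite{17} as classical background, so there is no in-paper argument to compare yours against, and your attempt must be judged on its own. Its set-up is sound. Since $\mathfrak f$ is entire and the sharing is CM, $u=(\mathfrak f'-a)/(\mathfrak f-a)$ and $v=(\mathfrak f'-b)/(\mathfrak f-b)$ are indeed zero-free entire functions; the master identity $(\mathfrak f-a)u-(\mathfrak f-b)v=b-a$ and the resulting expressions for $\mathfrak f-a$, $\mathfrak f-b$ and $\mathfrak f'-\mathfrak f$ are correct; the case $u\equiv v$ is handled correctly; and I have checked that your differential identity in $u,v,\alpha',\beta'$ is right (up to an overall sign). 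This is the standard reduction.

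The genuine gap is exactly where you flag it, and it is not a minor bookkeeping issue: the entire content of the theorem is concentrated in the claim that the cubic relation forces $\alpha'\equiv\beta'\equiv 0$, and you only say you \emph{expect} this. The inequalities $T(r,u)\le\overline{N}(r,1;u)+S(r,u)$ and $T(r,v)\le\overline{N}(r,1;v)+S(r,v)$ do not by themselves extract anything from a single algebraic relation among the monomials $u^2v,\,uv^2,\,u^2,\,v^2,\,uv,\,u,\,v$ with small-function coefficients; one must carry out a Borel/Nevanlinna-type degeneracy analysis (whether $v/u$, $u^2/v$, $uv$, etc.\ reduce to small functions, with a separate argument in each branch), and it is not even clear that the identity alone suffices: you may also need to re-use the entirety of $\mathfrak f$, i.e.\ that every zero of $u-v$ is absorbed by $v-1$ so that $\mathfrak f=a+(a-b)(v-1)/(u-v)$ is pole-free, since the analogous statement fails for meromorphic $\mathfrak f$. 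Until that case analysis is actually written out, the argument establishes the reduction but not the theorem.
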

In 1979, Mues and Steinmetz \cite{12} improved Theorem A by replacing CM sharing by IM sharing.
\begin{theoB}
Let $\mathfrak{f}$ be a non-constant entire function and $a$, $b$ be two distinct finite complex values. If $ \mathfrak{f}(z)$ and $ \mathfrak{f'}(z)$ share $a$, $b$ IM, then $\mathfrak{f}(z) \equiv \mathfrak{f'}(z)$.
\end{theoB}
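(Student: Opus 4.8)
The plan is to argue by contradiction, using the rigidity of IM sharing together with the second fundamental theorem applied to the values $a$, $b$, $\infty$. Since $\mathfrak{f}$ is entire, $\mathfrak{f}'$ is entire as well, so $\overline{N}(r,\infty;\mathfrak{f})=0$ and $T(r,\mathfrak{f}')\le T(r,\mathfrak{f})+S(r,\mathfrak{f})$; a non-constant polynomial and its derivative cannot share two finite values IM, so I may assume $\mathfrak{f}$ transcendental, and (as $a\neq b$) at most one of $a,b$ vanishes. Suppose $\mathfrak{f}\not\equiv\mathfrak{f}'$ and set $F=\mathfrak{f}-\mathfrak{f}'$, a non-zero entire function. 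The hypothesis means the zero sets of $\mathfrak{f}-a$ and $\mathfrak{f}'-a$ coincide, and likewise for $b$; hence every $a$-point and every $b$-point of $\mathfrak{f}$ is a common value point of $\mathfrak{f}$ and $\mathfrak{f}'$ and therefore a zero of $F$. Moreover, if $a\neq0$ then at an $a$-point $z_{0}$ one has $(\mathfrak{f}-a)'(z_{0})=\mathfrak{f}'(z_{0})=a\neq0$, so every $a$-point is simple; the same holds at $b$-points when $b\neq0$.

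The engine of the proof is the meromorphic function
$$\varphi=\Big(\frac{\mathfrak{f}''}{\mathfrak{f}'-a}-\frac{\mathfrak{f}'}{\mathfrak{f}-a}\Big)-\Big(\frac{\mathfrak{f}''}{\mathfrak{f}'-b}-\frac{\mathfrak{f}'}{\mathfrak{f}-b}\Big),$$
the logarithmic derivative of $(\mathfrak{f}'-a)(\mathfrak{f}-b)/\big((\mathfrak{f}-a)(\mathfrak{f}'-b)\big)$. As a combination of logarithmic derivatives of $\mathfrak{f}'-a$, $\mathfrak{f}-a$, $\mathfrak{f}'-b$, $\mathfrak{f}-b$, it satisfies $m(r,\varphi)=S(r,\mathfrak{f})$ by the lemma on the logarithmic derivative. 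Since the zeros of $\mathfrak{f}'-a$ are exactly the $a$-points of $\mathfrak{f}$ (and similarly for $b$), the only possible poles of $\varphi$ lie at the $a$- and $b$-points of $\mathfrak{f}$; each such point is a simple pole whose residue equals the difference of the multiplicities of $\mathfrak{f}'-a$ and $\mathfrak{f}-a$ (resp.\ of $\mathfrak{f}'-b$ and $\mathfrak{f}-b$). In particular these residues vanish precisely where the two multiplicities agree, so $N(r,\infty;\varphi)$ counts exactly the points at which the sharing fails to be CM.

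I would then close by a dichotomy on $\varphi$. If $\varphi\equiv0$, then $(\mathfrak{f}'-a)(\mathfrak{f}-b)=C(\mathfrak{f}-a)(\mathfrak{f}'-b)$ for a constant $C$, that is $(1-C)\mathfrak{f}\mathfrak{f}'+(Ca-b)\mathfrak{f}'+(Cb-a)\mathfrak{f}+(1-C)ab\equiv0$; the value $C=1$ collapses this to $(a-b)(\mathfrak{f}'-\mathfrak{f})\equiv0$, giving $\mathfrak{f}\equiv\mathfrak{f}'$, while any $C\neq1$ produces a first-order algebraic differential equation with constant coefficients whose transcendental entire solutions are then shown to be incompatible with sharing both $a$ and $b$ with $\mathfrak{f}'$, forcing $C=1$. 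The substantive case is $\varphi\not\equiv0$: here one plays the second fundamental theorem
$$T(r,\mathfrak{f})\le\overline{N}(r,a;\mathfrak{f})+\overline{N}(r,b;\mathfrak{f})+S(r,\mathfrak{f})$$
against the inequality $\overline{N}(r,a;\mathfrak{f})+\overline{N}(r,b;\mathfrak{f})\le\overline{N}(r,0;F)$ coming from the zeros of $F=\mathfrak{f}-\mathfrak{f}'$, using the multiplicity data carried by the poles of $\varphi$, to derive a contradiction with the transcendence of $\mathfrak{f}$.

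I expect the main obstacle to be exactly this control of multiplicities, which is what separates the IM hypothesis of Theorem B from the CM hypothesis of Theorem A. Under CM sharing the orders of $\mathfrak{f}-a$ and $\mathfrak{f}'-a$ at a common zero coincide, the poles of $\varphi$ disappear automatically, and the argument is short; under IM sharing these orders may differ, so one must bound the mismatch counting function $N(r,\infty;\varphi)$ and ensure that neither it, nor the multiplicity bookkeeping in the estimates above, nor the separate treatment of the value $0$ when $a$ or $b$ vanishes, contributes a term comparable to $T(r,\mathfrak{f})$. Disposing of the degenerate algebraic branch in the case $\varphi\equiv0$ is the remaining delicate point.
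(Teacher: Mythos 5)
Theorem B is not proved in this paper at all: it is quoted as a known result of Mues and Steinmetz \cite{12}, so there is no proof of the authors' to compare yours against. Judged on its own terms, your write-up is a plan rather than a proof, and the two places where you defer the work are exactly the places where the theorem lives. First, in the main branch $\varphi\not\equiv 0$, the estimate you propose to ``play against'' the second fundamental theorem does not close. With $a,b\neq 0$ (so that all $a$- and $b$-points of $\mathfrak{f}$ are simple, as you correctly observe), the chain
$T(r,\mathfrak{f})\le \overline{N}(r,a;\mathfrak{f})+\overline{N}(r,b;\mathfrak{f})+S(r,\mathfrak{f})\le N(r,0;F)+S(r,\mathfrak{f})\le T(r,F)+S(r,\mathfrak{f})\le T(r,\mathfrak{f})+S(r,\mathfrak{f})$
(using $m(r,F)=m(r,\mathfrak{f}(1-\mathfrak{f}'/\mathfrak{f}))\le m(r,\mathfrak{f})+S(r,\mathfrak{f})$ and $F$ entire) is consistent from end to end and yields no contradiction; it only tells you that almost every zero of $F$ is an $a$- or $b$-point and that these values have deficiency-zero behaviour. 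The extra input that actually forces $F\equiv 0$ --- in Mues--Steinmetz this is a second auxiliary function and a genuinely finer analysis of the multiplicities of $\mathfrak{f}'-a$ and $\mathfrak{f}'-b$ at the shared points (the quantity your $N(r,\infty;\varphi)$ measures) --- is precisely what is missing; ``using the multiplicity data carried by the poles of $\varphi$'' names the difficulty without resolving it.

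Second, the degenerate branch $\varphi\equiv 0$ with $C\neq 1$ is also only asserted. There the identity $(\mathfrak{f}'-a)(\mathfrak{f}-b)=C(\mathfrak{f}-a)(\mathfrak{f}'-b)$ expresses $\mathfrak{f}'$ as a M\"obius transformation of $\mathfrak{f}$ with constant coefficients, $\mathfrak{f}'=\frac{(a-Cb)\mathfrak{f}+(C-1)ab}{(1-C)\mathfrak{f}+(Ca-b)}$, and one must argue (e.g.\ from entirety of $\mathfrak{f}'$, Picard/Nevanlinna theory applied to the omitted value $(b-Ca)/(1-C)$, and then compatibility with the sharing of both $a$ and $b$) that no transcendental entire solution survives; this is routine but it is work, not a remark. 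So the architecture you describe --- the logarithmic-derivative function $\varphi$, the residue computation at shared points, the dichotomy on $\varphi$ --- is a sensible skeleton in the spirit of the original argument, but as written both decisive steps are gaps.
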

In 2011, Heittokanges et al. \cite{07} proved a similar analogue of Theorem A concerning their shifts.
\begin{theoC}
Let $\mathfrak{f}$ be a non-constant entire function of finite order, let $c$ be a nonzero finite complex value, and let $a$, $b$ be two distinct finite complex values. If $ \mathfrak{f}(z)$ and $ \mathfrak{f}(z+c)$ share $a$, $b$ CM, then $\mathfrak{f}(z) \equiv \mathfrak{f}(z+c)$.
\end{theoC}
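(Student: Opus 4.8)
The plan is to put the finite-order hypothesis to work through the difference analogue of the lemma on the logarithmic derivative (the Chiang--Feng / Halburd--Korhonen estimate), which for $\rho(\mathfrak{f})<\infty$ gives $m\!\left(r,\mathfrak{f}(z+c)/\mathfrak{f}(z)\right)=S(r,\mathfrak{f})$ together with its reciprocal form. Write $\mathfrak{g}(z):=\mathfrak{f}(z+c)$; since shifting preserves the Nevanlinna characteristic up to $S(r,\mathfrak{f})$, one has $T(r,\mathfrak{g})=T(r,\mathfrak{f})+S(r,\mathfrak{f})$, and $\mathfrak{g}$ is again entire of finite order. A preliminary reduction records that $\mathfrak{f}$ must be transcendental: a non-constant polynomial cannot share even a single value CM with a genuine shift of itself, since equality of the monic, equal-degree zero divisors of $\mathfrak{f}(z)-a$ and $\mathfrak{f}(z+c)-a$ would force $\mathfrak{f}(z)\equiv\mathfrak{f}(z+c)$ and hence periodicity, which is impossible for a non-constant polynomial.

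First I would use CM sharing to factor. Because $\mathfrak{f}$ and $\mathfrak{g}$ are entire and share $a$ CM, the quotient $(\mathfrak{f}-a)/(\mathfrak{g}-a)$ is entire and zero-free (the zeros of numerator and denominator coincide with identical multiplicities), hence equals $e^{\alpha}$ for an entire $\alpha$; similarly $(\mathfrak{f}-b)/(\mathfrak{g}-b)=e^{\beta}$. The crucial observation is that $e^{\alpha}$ and $e^{\beta}$ are \emph{small} relative to $\mathfrak{f}$: since $e^{\alpha}$ is zero-free we have $T(r,e^{\alpha})=m(r,e^{\alpha})$, and writing $e^{\alpha}=(\mathfrak{f}(z)-a)/(\mathfrak{f}(z+c)-a)$ and applying the difference logarithmic-derivative estimate to $F(z):=\mathfrak{f}(z)-a$ yields $m(r,e^{\alpha})=S(r,\mathfrak{f})$; likewise $T(r,e^{\beta})=S(r,\mathfrak{f})$.

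Next I would eliminate $\mathfrak{f}$. Subtracting the two factorizations $\mathfrak{f}=a+e^{\alpha}(\mathfrak{g}-a)$ and $\mathfrak{f}=b+e^{\beta}(\mathfrak{g}-b)$ gives
\[
(e^{\alpha}-e^{\beta})\,\mathfrak{g}=(b-a)+a\,e^{\alpha}-b\,e^{\beta}.
\]
If $e^{\alpha}\not\equiv e^{\beta}$, solving for $\mathfrak{g}$ exhibits it as a quotient of functions built from $e^{\alpha},e^{\beta}$, whence $T(r,\mathfrak{g})\le 2T(r,e^{\alpha})+2T(r,e^{\beta})+O(1)=S(r,\mathfrak{f})$; combined with $T(r,\mathfrak{g})=T(r,\mathfrak{f})+S(r,\mathfrak{f})$ this forces $T(r,\mathfrak{f})=S(r,\mathfrak{f})$, impossible for transcendental $\mathfrak{f}$. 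Hence $e^{\alpha}\equiv e^{\beta}$, and then the displayed identity collapses to $(b-a)(1-e^{\alpha})\equiv 0$; since $a\ne b$ we obtain $e^{\alpha}\equiv 1$, that is $\mathfrak{f}-a\equiv\mathfrak{g}-a$, which is exactly $\mathfrak{f}(z)\equiv\mathfrak{f}(z+c)$.

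The main obstacle, and the only place where the finite-order hypothesis is genuinely used, is establishing that $e^{\alpha}$ and $e^{\beta}$ are small functions; everything else is elementary algebra and a comparison of characteristics that avoids the second main theorem. Some care is needed to justify the reciprocal form of the difference lemma (pass to $H(z)=F(z+c)$ and shift by $-c$) and to confirm $S(r,F)=S(r,\mathfrak{f})$ via $T(r,F)=T(r,\mathfrak{f})+O(1)$. Once smallness is in hand, the dichotomy on $e^{\alpha}-e^{\beta}$ closes the argument at once.
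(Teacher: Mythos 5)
Theorem C is quoted in this paper from Heittokangas et al.\ \cite{07} without proof, so there is no internal argument to compare against; judged on its own, your proof is correct and is essentially the standard one. The two quotients $(\mathfrak{f}-a)/(\mathfrak{g}-a)=e^{\alpha}$ and $(\mathfrak{f}-b)/(\mathfrak{g}-b)=e^{\beta}$ are indeed zero-free entire functions, their smallness $T(r,e^{\alpha})=m(r,e^{\alpha})=S(r,\mathfrak{f})$ follows from the difference analogue of the logarithmic derivative lemma applied to $\mathfrak{f}-a$ (the reciprocal form recorded in Lemma \ref{3.1} covers the direction you need, and finite order implies hyper-order less than one), and the elimination identity $(e^{\alpha}-e^{\beta})\mathfrak{g}=(b-a)+ae^{\alpha}-be^{\beta}$ together with $T(r,\mathfrak{g})=T(r,\mathfrak{f})+S(r,\mathfrak{f})$ (Lemma \ref{3.2}) closes both branches of the dichotomy correctly; the preliminary reduction to the transcendental case is also sound.
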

In the same year, Qi \cite{13} further improved the Theorem C.
\begin{theoD}
Let $\mathfrak{f}$ be a non-constant entire function of finite order, let $c$ be a nonzero finite complex value, and let $a$, $b$ be two distinct finite complex values. If $ \mathfrak{f}(z)$ and $ \mathfrak{f}(z+c)$ share $a$, $b$ IM, then  $\mathfrak{f}(z) \equiv \mathfrak{f}(z+c)$.
\end{theoD}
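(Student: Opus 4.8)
The plan is to argue by contradiction and to exploit the fact that an entire function omits the value $\infty$. After the affine normalisation $\mathfrak f\mapsto(\mathfrak f-a)/(b-a)$ — which preserves ``entire of finite order'' and the IM--sharing of the two values with the shift — I may assume $a=0$ and $b=1$. Write $F(z)=\mathfrak f(z)$ and $G(z)=\mathfrak f(z+c)$; both are entire of finite order and share $0$ and $1$ IM, and since neither attains $\infty$ they also share $\infty$ (trivially CM), so that effectively $F$ and $G$ share the three targets $0,1,\infty$. Because $\mathfrak f$ has finite order, the difference analogue of the lemma on the logarithmic derivative applies and yields $T(r,G)=T(r,F)+S(r,F)$ together with $m(r,G/F)=S(r,F)$; consequently $S(r,G)=S(r,F)$, and every proximity term built from the logarithmic derivatives $F'/(F-\cdot)$ and $G'/(G-\cdot)$ is $S(r,F)$.

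The governing idea is that the two ratios
\[
u=\frac{F}{G},\qquad v=\frac{F-1}{G-1}
\]
are the right objects to study. Since the $0$-points of $F$ and of $G$ coincide as sets (IM sharing), the zeros and poles of $u$ can occur only at those common $0$-points, where the order of $u$ equals the difference of the multiplicities of $F$ and $G$; likewise $v$ is free of zeros and poles except at common $1$-points, where its order records the analogous multiplicity discrepancy. In the CM situation of Theorem C these discrepancies vanish identically, so $u$ and $v$ are zero-free entire functions of finite order, whence $u=e^{p}$ and $v=e^{q}$ with $p,q$ polynomials, and one finishes by elementary algebra and Borel's theorem. Under the weaker IM hypothesis the task is therefore to \emph{recover} this picture: I would prove that the sharing of $0$ and $1$ is in fact CM, i.e. that $F$ and $G$ assume the values $0$ and $1$ with the same multiplicities.

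The mechanism for this upgrade is the Second Main Theorem combined with the difference calculus. One introduces auxiliary functions that are logarithmic derivatives of $u$ and $v$, so that their proximity functions are $S(r,F)$ by the (difference) logarithmic derivative lemma, while their poles are supported exactly on the multiplicity-discrepant $0$- and $1$-points; estimating the resulting counting functions against $T(r,F)$ through the Second Main Theorem applied to $F$ and to $G$ forces the total discrepancy to be $S(r,F)$, and a further analysis of these exceptional points forces it to vanish. Granting this, $u=e^{p}$ and $v=e^{q}$ with polynomials $p,q$, so that $F=e^{p}G$ and $F-1=e^{q}(G-1)$. Subtracting gives $(e^{p}-e^{q})G=1-e^{q}$. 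If $e^{p}\equiv e^{q}$ then $e^{q}\equiv1$, hence $e^{p}\equiv1$ and $F\equiv G$, which is precisely the periodicity $\mathfrak f(z)\equiv\mathfrak f(z+c)$. If $e^{p}\not\equiv e^{q}$, then $G=(1-e^{q})/(e^{p}-e^{q})$ is a quotient of exponentials of polynomials, and here the shift relation $G(z)=F(z+c)$ together with Borel's theorem on exponential sums of finite order produces a contradiction, eliminating this case.

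The main obstacle is the IM$\to$CM upgrade, that is, showing that the multiplicity discrepancy at the shared $0$- and $1$-points is not merely $S(r,F)$ but actually zero; this is the genuinely delicate, computation-heavy part and is exactly what distinguishes Theorem D (IM) from the comparatively soft Theorem C (CM). A secondary but essential point is that the final algebraic relation $(e^{p}-e^{q})G=1-e^{q}$ admits entire solutions that are \emph{not} of the form $G(z)=F(z+c)$, so value sharing alone cannot close the argument: it is only the shift hypothesis, fed into Borel's theorem under the finite-order assumption, that rules out $e^{p}\not\equiv e^{q}$ and thereby forces $F\equiv G$.
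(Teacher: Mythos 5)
The paper itself offers no proof of this statement: Theorem D is quoted as a known result of Qi [13], so there is no internal argument to compare yours against. Judged on its own terms, your proposal has a genuine gap at its central step. You reduce everything to the claim that the IM sharing of $0$ and $1$ by $F=\mathfrak f(z)$ and $G=\mathfrak f(z+c)$ can be upgraded to CM sharing, and you yourself flag this as ``the genuinely delicate, computation-heavy part'' --- but that upgrade \emph{is} the theorem; everything after it is Theorem C. The mechanism you sketch does not close. Second Main Theorem arguments of the kind you describe can show that the counting function of shared points at which the multiplicities of $F$ and $G$ differ is $S(r,F)$, but no ``further analysis'' of the sort you allude to will push a counting function that is $o(T(r,F))$ down to one that is identically zero: there is no local obstruction to a single discrepant point, and asymptotic estimates cannot see it. With only an $S(r,F)$ bound on the discrepancy you cannot conclude $u=e^{p}$, $v=e^{q}$ with polynomials $p,q$ (the ratios may still have infinitely many zeros and poles), so the final Borel-type algebra is never reached.

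The standard route --- and, in substance, Qi's --- avoids proving CM sharing altogether. One forms difference-differential auxiliary functions such as
$$\Phi(z)=\frac{\mathfrak f'(z)\,\bigl(\mathfrak f(z+c)-\mathfrak f(z)\bigr)}{(\mathfrak f(z)-a)(\mathfrak f(z)-b)},\qquad \Phi^{*}(z)=\frac{\mathfrak f'(z+c)\,\bigl(\mathfrak f(z+c)-\mathfrak f(z)\bigr)}{(\mathfrak f(z+c)-a)(\mathfrak f(z+c)-b)},$$
which are entire because the shared $a$- and $b$-points cancel the zeros of the denominators, and whose characteristics are $S(r,\mathfrak f)$ by the lemma on the logarithmic derivative together with its finite-order difference analogue (Chiang--Feng, the paper's Lemma 2.1). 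A local computation shows that $\Phi-\Phi^{*}$ vanishes at the shared points that are simple for both functions; hence either those points have counting function $S(r,\mathfrak f)$, which the Second Main Theorem forbids, or one obtains identities among these auxiliary functions which, pursued further, force $\mathfrak f(z+c)\equiv\mathfrak f(z)$ directly (note $\Phi\equiv 0$ already gives the conclusion since $\mathfrak f'\not\equiv 0$). If you want to salvage your outline, replace the IM$\to$CM claim with an argument of this type; as written, the decisive step is asserted rather than proved.
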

In 2010, Zhang \cite{18, 19} considered the uniqueness between $\mathfrak{f}(z)$ and its shift $\mathfrak{f}(z+c)$ when they share two sets and obtain the following result:
\begin{theoE} \cite{18}
Let $c \in \mathbb{C}$, $S_1=\{1,\omega,\omega^2,\ldots \omega^{n-1}\}$ and $ S_2=\{\infty\}$. Suppose $\mathfrak{f}(z)$ is a non-constant meromorphic function of finite order such that $E_{\mathfrak{f}(z)}(S_j, \infty)=E_{\mathfrak{f}(z+c)}(S_j, \infty)$ for $j=1,2$ with $n\geq 4$. Then $\mathfrak{f}(z)\equiv t \mathfrak{f}(z+c)$, where $t^{n}=1$.
\end{theoE}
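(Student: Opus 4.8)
The plan is to write $g(z):=\mathfrak f(z+c)$ and read off the two hypotheses concretely. Since the elements of $S_1$ are exactly the roots of $w^n-1$, sharing $S_1$ CM means that $\mathfrak f^n-1$ and $g^n-1$ have the same zeros with the same multiplicities, while sharing $S_2=\{\infty\}$ CM means that $\mathfrak f$ and $g$ have the same poles with the same multiplicities. First I would form the quotient
\[ H:=\frac{\mathfrak f^n-1}{g^n-1}. \]
Every zero of the numerator cancels an equal-order zero of the denominator and conversely, while a common pole of $\mathfrak f,g$ of order $p$ produces poles of order $np$ in both $\mathfrak f^n-1$ and $g^n-1$, which also cancel. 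Thus $H$ has neither zeros nor poles, so $H=e^{\gamma}$ for some entire $\gamma$; since $\mathfrak f$ has finite order and $g$ is a shift of $\mathfrak f$ (so $T(r,g)=T(r,\mathfrak f)+S(r,\mathfrak f)$), the function $e^{\gamma}$ has finite order and $\gamma$ is a polynomial. This gives the basic relation $\mathfrak f^n-1=e^{\gamma}(g^n-1)$.

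The key reduction is that the conclusion is equivalent to $e^{\gamma}\equiv1$. Indeed, if $e^{\gamma}\equiv1$ then $\mathfrak f^n\equiv g^n$, so $(\mathfrak f/g)^n\equiv1$; a meromorphic function whose $n$-th power is identically $1$ takes values in the finite set of $n$-th roots of unity and is therefore a constant $t$ with $t^n=1$, whence $\mathfrak f(z)\equiv t\,g(z)=t\,\mathfrak f(z+c)$. Conversely $\mathfrak f=tg$ with $t^n=1$ forces $e^{\gamma}=1$. So everything reduces to excluding $e^{\gamma}\not\equiv1$.

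To exclude $e^{\gamma}\not\equiv1$ I would argue by contradiction using the Second Main Theorem with target values chosen to detect $e^{\gamma}$. Rewriting the relation as $\mathfrak f^n-(1-e^{\gamma})=e^{\gamma}g^n$ and recalling that $e^{\gamma}$ is zero-free, the zeros of the left-hand side are precisely the zeros of $g$; symmetrically, the zeros of $\mathfrak f$ are the zeros of $g^n-(1-e^{-\gamma})$. In the model case where $e^{\gamma}\equiv t_0$ is a constant $\neq 0,1$, let $b_1,\dots,b_n$ denote the (distinct) $n$-th roots of $1-t_0$; applying the Second Main Theorem to $\mathfrak f$ at the $n+2$ distinct values $0,\infty,b_1,\dots,b_n$ gives
\[ n\,T(r,\mathfrak f)\le\overline{N}(r,0;\mathfrak f)+\overline{N}(r,\infty;\mathfrak f)+\overline{N}(r,0;\mathfrak f^n-(1-t_0))+S(r,\mathfrak f). \]
The first two terms are each at most $T(r,\mathfrak f)$, and the third equals $\overline{N}(r,0;g)\le T(r,g)=T(r,\mathfrak f)+S(r,\mathfrak f)$; hence $(n-3)\,T(r,\mathfrak f)\le S(r,\mathfrak f)$, which is impossible for $n\ge4$. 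This is exactly where the hypothesis $n\ge4$ enters.

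The main obstacle is to run this counting when $\gamma$ is a nonconstant polynomial. Then the target set $\{b_i(z)\}$ of $n$-th roots of $1-e^{\gamma}$ moves with $z$ and, because $\gamma$ may have degree equal to $\rho(\mathfrak f)$, it is \emph{not} a small function, so the classical moving-target Second Main Theorem does not apply directly. The plan here is to exploit that $g$ is a genuine shift rather than an arbitrary function: the difference and logarithmic-derivative estimates valid in finite order (for instance $m\!\left(r,\mathfrak f(z+c)/\mathfrak f(z)\right)=S(r,\mathfrak f)$, together with the shared-pole structure) should force $\gamma$ to be constant, reducing matters to the clean computation above. Showing that a comparably large exponential factor cannot persist between a finite-order function and its shift while both share $S_1$ and $\{\infty\}$ CM is the technical heart of the argument; once $e^{\gamma}\equiv1$ is secured, the identification $\mathfrak f(z)\equiv t\,\mathfrak f(z+c)$ with $t^n=1$ follows at once from the equivalence in the previous paragraph.
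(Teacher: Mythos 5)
Your reduction is set up correctly and is the natural first half of Zhang's argument: since $S_1$ is the zero set of $w^n-1$, the hypotheses say exactly that $F:=\mathfrak{f}^n$ and $G:=\mathfrak{f}(z+c)^n$ share $1$ and $\infty$ CM, so $(F-1)/(G-1)=e^{\gamma}$ with $\gamma$ a polynomial, and the conclusion is equivalent to $e^{\gamma}\equiv 1$. Your handling of the case where $\gamma$ is constant is also correct: for $e^{\gamma}\equiv t_0\neq 0,1$ the second main theorem at $0,\infty$ and the $n$ distinct roots of $w^n=1-t_0$ gives $(n-3)T(r,\mathfrak{f})\le S(r,\mathfrak{f})$, impossible for $n\ge 4$. (For the record, the paper under review gives no proof of Theorem E; it is quoted from Zhang, so the proposal can only be judged on its own terms.)

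The genuine gap is the case of nonconstant polynomial $\gamma$, which you defer with the remark that the finite-order difference estimates ``should force $\gamma$ to be constant.'' That is the entire difficulty of the theorem, and the mechanism you point to does not apply as stated: $e^{\gamma}=(\mathfrak{f}^n-1)/(\mathfrak{f}(z+c)^n-1)$ is not a quotient of a function by its own shift (the subtraction of $1$ destroys that structure), so the Chiang--Feng/Halburd--Korhonen estimate $m\left(r,\mathfrak{f}(z+c)/\mathfrak{f}(z)\right)=S(r,\mathfrak{f})$ gives no direct control on $\gamma$, and there is no a priori reason that $\gamma$ must be constant before some counting argument is run. One standard way to close the gap is to avoid isolating $\gamma$ altogether and instead apply the second-main-theorem machinery for two functions sharing $1$ and $\infty$ CM (for instance via the auxiliary function $\frac{F''}{F'}-\frac{2F'}{F-1}-\frac{G''}{G'}+\frac{2G'}{G-1}$, or a lemma of Yi's of the type: if $\overline{N}(r,0;F)+\overline{N}(r,0;G)+2\overline{N}(r,\infty;F)$ is small compared with $T(r,F)+T(r,G)$, then $F\equiv G$ or $FG\equiv 1$); here the smallness hypothesis is supplied by $\overline{N}(r,0;F)=\overline{N}(r,0;\mathfrak{f})\le \frac{1}{n}T(r,F)$ and its analogues, and this is where $n\ge 4$ actually earns its keep. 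The residual alternative $FG\equiv 1$, i.e.\ $\mathfrak{f}(z)\mathfrak{f}(z+c)\equiv s$ with $s^n=1$, must then be excluded separately (the pole sharing forces $\mathfrak{f}$ to be zero-free and pole-free, hence $\mathfrak{f}=e^{h}$ with $h$ a polynomial satisfying $h(z)+h(z+c)=\mathrm{const}$, which forces $\mathfrak{f}$ constant). Until the nonconstant-$\gamma$ case is treated by some such device, the proof is incomplete at its central step.
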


%Following two examples show that Theorem A is not true for $n=1$ or $n=2$.
%\begin{exm} (\cite{18})
%Let $\mathfrak{f}(z)=Ae^{Bz}+1$, where $A$ and $B$ are two non-zero constants. If $e^{Bc}=-1$, then it is easy to verify that $\mathfrak{f}(z)$ and $\mathfrak{f}(z+c)$ share $(1,\infty)$ and %$(\infty, \infty)$ but $\mathfrak{f}(z)\not\equiv t\mathfrak{f}(z+c)$, where $t^{n}=1$.
%\end{exm}

%\begin{exm} (\cite{18})
%Let $\mathfrak{f}(z)= \sqrt{2}\sin z$ and $c=\frac{(2k+1)\pi}{2}$, where $k$ is an integer. Then $\mathfrak{f}(z+c)=(-1)^{k}\sqrt{2}\cos z$. Here we note that $S_1= \{1,-1\}$. Clearly %$E_{\mathfrak{f}(z)}(S_j, \infty)=E_{\mathfrak{f}(z+c)}(S_j, \infty)$, for $j=1,2$ but $\mathfrak{f}(z)\not\equiv t\mathfrak{f}(z+c)$, where $t^{n}=1$.
%\end{exm}

%In the same year  Zhang (\cite{19}) investigate the relation between $\mathfrak{f}(z)$ and its shift $\mathfrak{f}(z+c)$ sharing different set $S$ and established following result:

%\begin{theoH} \cite{19}
%Let $m\geq 2$, $n \geq 2m+4$ with $n$ and $n-m$ having no common factors. Let $ S=\{\omega~|~ \omega^n +a \omega^{n-m} +b=0\}$ and  $c$ be a non-zero complex number. If, for a  non-constant meromorphic function $\mathfrak{f}$ of finite order, $E_{\mathfrak{f}(z)}(S) = E_{\mathfrak{f}(z+c)}(S)$ and $ E_{\mathfrak{f}(z)}(\{\infty\}) =E_{\mathfrak{f}(z+c)}(\{\infty\})$, then $\mathfrak{f}(z) \equiv \mathfrak{f}(z+c).$
%\end{theoH}
\begin{theoF} \cite{19}
Let $n\geq 5$ be an integer and  $a$, $b$ be two non-zero complex numbers such that the equation $\omega^n +a \omega^{n-1} +b=0$ has no multiple roots. Let $S=\{\omega~|~ \omega^n +a \omega^{n-1} +b=0\}$ and  $c$ be any non-zero complex number. If, for a  non-constant entire function $\mathfrak{f}$ of finite order, $E_{\mathfrak{f}(z)}(S) = E_{\mathfrak{f}(z+c)}(S)$, then $\mathfrak{f}(z) \equiv \mathfrak{f}(z+c).$
\end{theoF}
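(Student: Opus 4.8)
The plan is to translate the set-sharing hypothesis into a functional equation for $\mathfrak f$, eliminate the exponential factor it produces, and then appeal to the rigidity of $P(\omega)=\omega^n+a\omega^{n-1}+b$ as a strong uniqueness polynomial.

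First I would record the reduction. Since $\mathfrak f$ is entire, the points of $E_{\mathfrak f(z)}(S)$ are exactly the zeros of $F:=P(\mathfrak f(z))$ and those of $E_{\mathfrak f(z+c)}(S)$ are the zeros of $G:=P(\mathfrak f(z+c))$, counted with multiplicity; so the hypothesis says that the entire functions $F$ and $G$ share $0$ CM. As $\mathfrak f$ has finite order, so do $F$ and $G$ (shifting preserves order and $P$ is a polynomial), whence $F/G$ is entire, zero-free and of finite order, and therefore $F=e^{h}G$ for some polynomial $h$. A non-constant polynomial $\mathfrak f$ is impossible here: $F$ and $G$ would then be polynomials of equal degree and equal leading coefficient, forcing $e^h\equiv1$, and the strong uniqueness step below would make $\mathfrak f$ periodic, which no non-constant polynomial is; so $\mathfrak f$ is transcendental and $\log r=S(r,\mathfrak f)$, $T(r,h')=S(r,\mathfrak f)$. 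I would also note the algebraic features $P(\omega)-b=\omega^{n-1}(\omega+a)$ and $P'(\omega)=\omega^{n-2}\bigl(n\omega+(n-1)a\bigr)$, so that $P$ has exactly two critical points $0$ and $-(n-1)a/n$.

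The crux, and what I expect to be the main obstacle, is to prove that $e^{h}$ is a nonzero constant. Here I would apply the Second Main Theorem to $\mathfrak f(z)$ with the $n$ roots of $P$, using that $\mathfrak f$ is entire, to obtain $(n-1)T(r,\mathfrak f)\le \overline N(r,0;F)+S(r,\mathfrak f)$, and symmetrically for $\mathfrak f(z+c)$. Since $\mathfrak f$ has finite order, the Chiang–Feng / Halburd–Korhonen difference analogue of the lemma on the logarithmic derivative gives $T(r,\mathfrak f(z+c))=T(r,\mathfrak f(z))+S(r,\mathfrak f)$ and $m\bigl(r,\mathfrak f(z+c)/\mathfrak f(z)\bigr)=S(r,\mathfrak f)$, so that $F$ and $G$ have equal characteristics up to $S(r,\mathfrak f)$ and their shared zeros account for almost all of $T(r,\mathfrak f)$. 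Rewriting $F=e^hG$ as $\mathfrak f(z)^{n-1}\bigl(\mathfrak f(z)+a\bigr)-e^{h}\mathfrak f(z+c)^{n-1}\bigl(\mathfrak f(z+c)+a\bigr)=(e^{h}-1)b$ and feeding these estimates into the identity $F'G-FG'=h'FG$ (equivalently $F'/F-G'/G=h'$), I would show that a non-constant $e^h$ forces $\overline N(r,0;G)$ to fall strictly below the level demanded by the Second Main Theorem lower bound, a contradiction; hence $h'\equiv0$ and $e^{h}\equiv\alpha$ for a nonzero constant $\alpha$. Controlling the growth of $e^h$ against $T(r,\mathfrak f)$ — rather than merely bounding its proximity function, which only shows $e^h$ is a small function and not a constant — is the delicate point, and is where the hypothesis $n\ge5$ and the two-term structure of $P$ enter.

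Finally, with $P(\mathfrak f(z))\equiv\alpha\,P(\mathfrak f(z+c))$ in hand, I would invoke the strong uniqueness polynomial property of $P$. The hypotheses deliver precisely Fujimoto's criterion: the two critical values are $P(0)=b\neq0$ and $P\bigl(-(n-1)a/n\bigr)=b+\tfrac{a}{n}\bigl(-\tfrac{(n-1)a}{n}\bigr)^{n-1}$, which is nonzero by the no-multiple-root assumption and different from $b$ because $a\neq0$, while the critical multiplicities $n-2$ and $1$ are distinct for $n\ge5$. Thus $P$ is a strong uniqueness polynomial, so $P(\mathfrak f(z))\equiv\alpha\,P(\mathfrak f(z+c))$ yields $\mathfrak f(z)\equiv\mathfrak f(z+c)$; substituting back forces $\alpha=1$, and the proof is complete.
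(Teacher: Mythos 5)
You should first note that the paper contains no proof of Theorem F: it is quoted from Zhang \cite{19}, and the paper's own argument (for Theorem \ref{th1}) is built around the auxiliary function $\psi=M''/M'-N''/N'$ with $M=1/Q(\mathfrak{f})$ precisely because weight-$2$ sharing does not permit your $e^{h}$ device. Under full CM sharing your skeleton --- pass to $F=P(\mathfrak{f}(z))$, $G=P(\mathfrak{f}(z+c))$, write $F=e^{h}G$ with $h$ a polynomial, kill the exponential, then use a uniqueness property of $P$ --- is indeed the natural one and matches Zhang's. The problem is that the decisive middle step is exactly the one you do not carry out, and the mechanism you propose for it does not work. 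The identity $F'G-FG'=h'FG$ yields nothing: at a common zero of $F$ and $G$ both logarithmic derivatives have simple poles with residue equal to the common multiplicity, $h'$ is a polynomial with no poles anywhere, and no inequality on $\overline{N}(r,0;G)$ falls out of this. The step is actually closed as follows. Writing $F/G=\prod_{i=1}^{n}\bigl(\mathfrak{f}(z)-a_i\bigr)/\bigl(\mathfrak{f}(z+c)-a_i\bigr)$ and applying the Chiang--Feng lemma to each factor gives $m(r,e^{\pm h})=S(r,\mathfrak{f})$, hence $T(r,e^{h})=m(r,e^{h})=S(r,\mathfrak{f})$ and $\beta:=e^{h}$ is a small function (as you observe). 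Now set $F^{*}:=-\mathfrak{f}^{n-1}(\mathfrak{f}+a)/b$ and $G^{*}$ its shift, so that $F^{*}-1=\beta(G^{*}-1)$, i.e. $F^{*}=\beta G^{*}+1-\beta$. If $\beta\not\equiv 1$, the second main theorem for three small functions applied to $F^{*}$ with targets $0$, $\infty$, $1-\beta$ gives $nT(r,\mathfrak{f})\le \overline{N}(r,0;\mathfrak{f})+\overline{N}(r,-a;\mathfrak{f})+\overline{N}(r,0;G^{*})+S(r,\mathfrak{f})\le 4T(r,\mathfrak{f})+S(r,\mathfrak{f})$, contradicting $n\ge 5$. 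Note this forces $\beta\equiv 1$ outright, not merely $\beta$ constant, so in the final step only the plain uniqueness-polynomial property of $P$ is needed, not strong uniqueness.

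A secondary gap is that final step itself. Verifying that the two critical values $b$ and $b+\frac{a}{n}\bigl(-\frac{(n-1)a}{n}\bigr)^{n-1}$ are nonzero and distinct is not Fujimoto's sufficient condition for a (strong) uniqueness polynomial; his criterion requires, beyond injectivity of $P$ on its critical points, inequalities involving the multiplicities of $P'$ and, for the strong version, further arithmetic conditions on the critical values. For this particular $P$ the implication $P(\mathfrak{f})\equiv P(\mathfrak{g})\Rightarrow\mathfrak{f}\equiv\mathfrak{g}$ for non-constant entire functions and $n\ge5$ is the classical Frank--Reinders/Yi argument via the factorization of $\mathfrak{f}^{n}-\mathfrak{g}^{n}=-a(\mathfrak{f}^{n-1}-\mathfrak{g}^{n-1})$ into linear factors $\mathfrak{f}-t\mathfrak{g}$; you should either reproduce that argument or cite it precisely rather than gesture at Fujimoto. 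With those two repairs your outline becomes a complete proof and coincides in substance with the one in \cite{19}.
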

\begin{question}
  It is natural to ask whether the set $S$ in Theorem F can be replaced by another finite set.
\end{question}
In this note, we try to answer the above question.

\section{Periodicity of transcendental entire functions}
Let $l$ be a non-negative integer or infinity. For $a\in \mathbb{C} \cup \{\infty\}$, we denote by $E_l(a; \mathfrak{f})$, the set of all $a$- points of $\mathfrak{f}$, where an $a$- point of multiplicity $m$ is counted $m$ times if $m \leq l$ and $l+1$ times if $m> l$. \\
If for two non-constant meromorphic functions $\mathfrak{f}$ and $\mathfrak{g}$, we have $$ E_l(a; \mathfrak{f})= E_l(a; \mathfrak{g}),$$ then we say that  $\mathfrak{f}$ and $\mathfrak{g}$ share the value $a$ with weight $l$. Thus  the IM (resp. CM) sharing correspond to the weight $0$ (resp. $\infty$). The idea of weighted sharing was first introduced in \cite{09}. \\
 Let  $S \subset \mathbb{C} \cup \{\infty\}$. We define $E_{\mathfrak{f}}(S,l)$ as $$ E_{\mathfrak{f}}(S,l)= \bigcup_{a\in S} E_l(a; \mathfrak{f}),$$ where $l$ is a non-negative integer or infinity. Clearly  $E_{\mathfrak{f}}(S)=E_{\mathfrak{f}}(S,\infty)$.
\medbreak
Let
\begin{equation}\label{bc1}
 Q(z)=(z-a_1)(z-a_2)\ldots(z-a_n)
\end{equation} be a \emph{monic} and \emph{strong uniqueness polynomial} of degree $n$ in $\mathbb{C}[z]$ with $a_i\neq a_j$, $1\leq i,j\leq n$. Assume that $Q(z)-Q(0)$ have $m_1$ simple zeros and $m_2$ multiple zeros. Further suppose that $k$ be the number of distinct zeros of $Q'(z)$. Take
\begin{equation}\label{bc2}
  S=\{a_1,a_2,\ldots,a_n\}
\end{equation}
\begin{theo} \label{th1} Let $\mathfrak{f}$ be a transcendental entire function with hyper order less than one and $c$ be a nonzero complex number. If $\mathfrak{f}$ and $\mathfrak{f}(z+c)$ share the set $S$ (defined in \ref{bc2}) with weight $2$ and  $n> \max\{2k+2, m_1+m_2\}$ with $m_1+m_2>2$ and $k\geq 2$, then $\mathfrak{f}(z) \equiv \mathfrak{f}(z+c).$
\end{theo}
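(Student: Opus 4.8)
The plan is to convert the set-sharing hypothesis into a value-sharing statement for two entire functions and then run the standard weighted-sharing dichotomy, letting the numerical hypotheses force the favourable branch. Writing $\mathfrak{g}(z)=\mathfrak{f}(z+c)$ and setting
\[
 F=Q(\mathfrak{f}),\qquad G=Q(\mathfrak{g}),
\]
the zeros of $F$ are exactly the points where $\mathfrak{f}$ takes a value in $S$, counted with the same multiplicity (the $a_i$ being distinct), and likewise for $G$. Hence the hypothesis $E_{\mathfrak{f}}(S,2)=E_{\mathfrak{g}}(S,2)$ says precisely that the entire functions $F$ and $G$ share the value $0$ with weight $2$. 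Because $\rho_2(\mathfrak{f})<1$, the difference analogues of the lemma on the logarithmic derivative apply, so $T(r,\mathfrak{g})=T(r,\mathfrak{f})+S(r,\mathfrak{f})$ and $m\big(r,\mathfrak{g}/\mathfrak{f}\big)=S(r,\mathfrak{f})$; combined with $T(r,Q(\mathfrak{f}))=n\,T(r,\mathfrak{f})+O(1)$ this makes $T(r,F)$, $T(r,G)$ and $T(r,\mathfrak{f})$ mutually comparable, so every error term $S(r,F)$, $S(r,G)$ coincides with $S(r,\mathfrak{f})$.

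Next I would introduce the standard auxiliary function
\[
 \mathcal{H}=\left(\frac{F''}{F'}-\frac{2F'}{F}\right)-\left(\frac{G''}{G'}-\frac{2G'}{G}\right),
\]
and split into the cases $\mathcal{H}\not\equiv 0$ and $\mathcal{H}\equiv 0$. The case $\mathcal{H}\not\equiv 0$ is the heart of the argument and the step I expect to be hardest. Since $F$ and $G$ share $0$ with weight $2$, every common zero of multiplicity at most $2$ is a zero, not a pole, of $\mathcal{H}$, so $N(r,\mathcal{H})$ is governed only by the multiple shared zeros and by the zeros of $F'=Q'(\mathfrak{f})\mathfrak{f}'$ and $G'=Q'(\mathfrak{g})\mathfrak{g}'$ lying over the $k$ distinct zeros of $Q'$. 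Estimating $m(r,\mathcal{H})=S(r,\mathfrak{f})$ and feeding the bound for $N(r,\mathcal{H})$ into $\overline{N}(r,0;F)\le T(r,\mathcal{H})+S(r,\mathfrak{f})$, together with the second main theorem for $\mathfrak{f}$ applied to the $k$ critical values of $Q$, should yield an inequality of the shape
\[
 n\,T(r,\mathfrak{f})\le (2k+2)\,T(r,\mathfrak{f})+S(r,\mathfrak{f}),
\]
contradicting $n>2k+2$. The delicate point is the bookkeeping that makes the coefficient come out to be exactly $2k+2$: controlling the contribution of zeros of multiplicity $\ge 3$ (where the weight $2$, rather than full CM, must be exploited) and of the ramification of $\mathfrak{f}$ over the critical points, and this is where $k\ge 2$ is genuinely used.

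Once $\mathcal{H}\equiv 0$ is forced, two integrations produce a bilinear relation
\[
 \frac{1}{F}=\frac{A}{G}+B,\qquad A\neq 0,
\]
with constants $A,B$. If $B=0$, this reads $F\equiv\alpha G$ with $\alpha=A^{-1}$, that is $Q(\mathfrak{f})\equiv\alpha\,Q(\mathfrak{g})$, whence the strong-uniqueness-polynomial property of $Q$ gives $\mathfrak{f}(z)\equiv\mathfrak{f}(z+c)$ at once. To finish I must exclude $B\neq 0$. In that case the relation realises $F$ as a M\"obius image of $G$, so the entire function $G=Q(\mathfrak{g})$ omits a finite value; equivalently $\mathfrak{g}=\mathfrak{f}(z+c)$ omits every point of some fibre of $Q$. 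Since a transcendental entire function omits at most one value, such a fibre would have to reduce to a single point, already in conflict with $k\ge 2$; a quantitative version, obtained by applying the second main theorem to $\mathfrak{f}$ over the fibre $Q^{-1}(Q(0))$ of cardinality $m_1+m_2$, should turn the hypotheses $m_1+m_2>2$ and $n>m_1+m_2$ into the desired contradiction. Hence only $B=0$ occurs, and the theorem follows.
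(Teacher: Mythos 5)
Your plan reproduces the paper's proof almost step for step: your auxiliary function $\mathcal{H}$ is exactly the paper's $\psi=\frac{M''}{M'}-\frac{N''}{N'}$ after the substitution $M=1/F$, $N=1/G$; the case $\mathcal{H}\not\equiv 0$ is handled by the same combination of the difference analogues (Lemmas 2.1 and 2.2), the pole bookkeeping for $\mathcal{H}$, and the second main theorem over the $n$ roots of $Q$ and the $k$ critical points, ending in the same contradiction with $n>2k+2$; and the case $\mathcal{H}\equiv 0$ integrates to the same M\"obius relation, with the strong-uniqueness property of $Q$ closing the $B=0$ branch. The one genuine divergence is the $B\neq 0$ branch. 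The paper shows $Q(\mathfrak{f}(z+c))$ omits $-d_0/d_1$ and then applies the second main theorem twice, splitting on whether $Q(0)=-d_0/d_1$ and using the decomposition of $Q(z)-Q(0)$ into $m_1$ simple and $m_2$ multiple roots; this is precisely where the hypotheses $n>m_1+m_2$ and $m_1+m_2>2$ are consumed. Your alternative --- $\mathfrak{f}(z+c)$ omits every point of the fibre $Q^{-1}(-A/B)$, Picard forces that fibre to be a single point, hence $Q(w)+A/B=(w-\beta)^n$ and $Q'$ has only one distinct zero, contradicting $k\geq 2$ --- is correct, shorter, and shows those two numerical hypotheses are not actually needed for this step; your trailing suggestion to also run the second main theorem over $Q^{-1}(Q(0))$ is superfluous and aims at the wrong fibre, since the omitted value is $-A/B$, not necessarily $Q(0)$. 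Two minor inaccuracies in your $\mathcal{H}\not\equiv 0$ sketch: what is controlled by $T(r,\mathcal{H})$ is the counting function of the shared zeros of weight-controlled multiplicity (the paper's $N(r,\infty;M|=1)$), not all of $\overline{N}(r,0;F)$, and the zeros of $F'$ and $G'$ feeding $N(r,\mathcal{H})$ must exclude those already absorbed into the shared zeros --- but these are exactly the bookkeeping issues you flagged, and they resolve as in the paper.
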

\begin{exm}\cite{BCM}
Consider the modified Frank-Reinders polynomial $$P(z)=\frac{(n-1)(n-2)}{2}z^n-n(n-2)z^{n-1}+\frac{n(n-1)}{2}z^{n-2}-c,$$ where $c\in \mathbb{C}\setminus\{0, 1, \frac{1}{2}\}$ and $n\geq 7$. Now,
$$P(z)-P(0)=z^{n-2}\cdot\left(\frac{(n-1)(n-2)}{2}z^2-n(n-2)z+\frac{n(n-1)}{2}\right).$$ Thus here $m_1+m_2=3$. Also, note that $$P'(z)=\frac{n(n-1)(n-2)}{2}z^{3}(z-1)^2.$$
Thus, here, $k=2$. Moreover, $P$ is strong uniqueness polynomial when $n\geq 6$ (see, Theorem 1.2, \cite{BCM}). Consider the zero set of  $P(z)$ as $S$.\\
For any  transcendental entire function $\mathfrak{f}$  with hyper order less than one, and for any a nonzero complex number $c$, if $\mathfrak{f}$ and $\mathfrak{f}(z+c)$ share the set $S$ with weight $2$, then $\mathfrak{f} \equiv \mathfrak{f}(z+c).$
\end{exm}
\medbreak
Let $l$ be a positive integer or infinity. For $a\in \mathbb{C} \cup \{\infty\}$, we denote by $E_{l)}(a; \mathfrak{f})$, the set of all $a$- points of $\mathfrak{f}$, whose multiplicities are not greater than $l$ and each such $a$ -points are counted according to multiplicity.  \par If for two non-constant meromorphic functions $\mathfrak{f}$ and $\mathfrak{g}$, we have $$ E_{l)}(a; \mathfrak{f})= E_{l)}(a; \mathfrak{g}),$$ then we say that  $\mathfrak{f}$ and $\mathfrak{g}$ share the value $a$ with ``weak weight $l$".  \par Let  $S \subset \mathbb{C} \cup \{\infty\}$. We put  $$ E_{l)}(S; \mathfrak{f})= \bigcup_{a\in S} E_{l)}(a; \mathfrak{f}),$$ where $l$ is a positive integer or infinity.
\begin{theo} \label{th2}
 Let $\mathfrak{f}$ be a transcendental entire function with hyper order less than one and $c$ be a nonzero complex number. If $\mathfrak{f}$ and $\mathfrak{f}(z+c)$ share the set $S$ (defined in \ref{bc2}) with weak weight $3$ and  $n> \max\{2k+2, m_1+m_2\}$ with $m_1+m_2>2$ and $k\geq 2$, then $\mathfrak{f}(z) \equiv \mathfrak{f}(z+c).$
\end{theo}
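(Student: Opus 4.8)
The plan is to translate the set-sharing condition into a value-sharing statement for two composed entire functions and then run a second-main-theorem argument. Set $F(z)=Q(\mathfrak{f}(z))$ and $G(z)=Q(\mathfrak{f}(z+c))$; since $\mathfrak{f}$ is transcendental entire and $Q$ is a polynomial of degree $n$, both $F$ and $G$ are transcendental entire. As the $n$ zeros $a_1,\dots,a_n$ of $Q$ are distinct (hence simple), a point $z_0$ is a zero of $F$ of multiplicity $p$ precisely when $\mathfrak{f}(z_0)=a_i$ for some $i$ and the $a_i$-point of $\mathfrak{f}$ has multiplicity $p$. Therefore the hypothesis that $\mathfrak{f}$ and $\mathfrak{f}(z+c)$ share $S$ with weak weight $3$ becomes exactly $E_{3)}(0;F)=E_{3)}(0;G)$, i.e. $F$ and $G$ share $0$ with weak weight $3$. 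Since $\rho_2(\mathfrak{f})<1$, I may invoke the difference Nevanlinna theory: $T(r,\mathfrak{f}(z+c))=T(r,\mathfrak{f})+S(r,\mathfrak{f})$ and the difference logarithmic-derivative lemma; together with Valiron's identity $T(r,Q(\mathfrak{f}))=n\,T(r,\mathfrak{f})+O(1)$ these give $T(r,F)=n\,T(r,\mathfrak{f})+O(1)$ and $T(r,G)=n\,T(r,\mathfrak{f})+S(r,\mathfrak{f})$, so $S(r,\mathfrak{f})$ is a common small term for both $F$ and $G$.

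Next I would introduce the Lahiri-type auxiliary function $$H=\left(\frac{F''}{F'}-\frac{2F'}{F}\right)-\left(\frac{G''}{G'}-\frac{2G'}{G}\right).$$ A local computation shows that at a common zero of $F$ and $G$ of equal multiplicity the two bracketed expressions have the same principal part and that common simple zeros are in fact zeros of $H$; in particular the shared zeros of multiplicity $1,2,3$ guaranteed by the weak-weight-$3$ hypothesis are harmless for the poles of $H$. The remaining poles of $H$ can only occur at zeros of $F'$ (respectively $G'$) off the zero set of $F$ (respectively $G$) and at the unshared zeros of multiplicity at least $4$. The logarithmic-derivative lemma yields $m(r,H)=S(r,\mathfrak{f})$, whence $T(r,H)=N(r,H)+S(r,\mathfrak{f})$. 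Since $F'=Q'(\mathfrak{f})\mathfrak{f}'$, the zeros of $F'$ off the zero set of $F$ are governed by the $k$ distinct zeros $d_1,\dots,d_k$ of $Q'$ and contribute at most $\sum_{i=1}^{k}\overline{N}(r,d_i;\mathfrak{f})$, and symmetrically for $G$; this is how the integer $k$ enters the estimate.

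I then split into the cases $H\not\equiv0$ and $H\equiv0$. If $H\not\equiv0$, I would bound the common simple zeros of $F$ and $G$ by $N(r,0;H)\le T(r,H)+O(1)\le N(r,H)+S(r,\mathfrak{f})$ and substitute this into the second main theorem for $\mathfrak{f}$ applied to the $n$ values $a_1,\dots,a_n$ together with the critical structure of $Q$ (the value $Q(0)$ of $F$ has at most $m_1+m_2$ distinct $\mathfrak{f}$-preimages, which is where that quantity appears). Collecting terms, the shared zeros contribute on the order of $n\,T(r,\mathfrak{f})$, while the error is controlled by the $2k$ reduced counting functions above and the truncated high-multiplicity part, and the hypotheses $n>2k+2$ and $n>m_1+m_2$ are precisely the thresholds forcing an inequality of the form $(n-2k-2)\,T(r,\mathfrak{f})\le S(r,\mathfrak{f})$, impossible for a transcendental $\mathfrak{f}$. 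This counting argument — keeping the truncated, weak-weight-$3$ error terms inside $S(r,\mathfrak{f})$ while extracting the coefficient $n-2k-2$ — is the technical heart and the step I expect to be the main obstacle; it is also where the weak-weight-$3$ bookkeeping differs from the weight-$2$ bookkeeping of Theorem \ref{th1}.

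Finally, if $H\equiv0$, integrating twice gives $\dfrac{1}{F}=\dfrac{A}{G}+B$ for constants $A\neq0$ and $B$, that is $F=\dfrac{G}{A+BG}$. If $B\neq0$, then $A+BG$ must be zero-free, for otherwise $F$ would acquire a pole; hence $G=Q(\mathfrak{f}(z+c))$ omits the value $-A/B$, so $\mathfrak{f}(z+c)$ omits every root of $Q(u)+A/B=0$. This polynomial has at least two distinct roots unless $Q'(u)=n(u-e)^{n-1}$, i.e. $k=1$, which is excluded by $k\geq2$; thus $\mathfrak{f}(z+c)$ would omit at least two finite values, contradicting Picard's theorem. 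Therefore $B=0$ and $F\equiv\alpha G$ with $\alpha=1/A$, i.e. $Q(\mathfrak{f}(z))\equiv\alpha\,Q(\mathfrak{f}(z+c))$. Since $Q$ is a strong uniqueness polynomial, this forces $\mathfrak{f}(z)\equiv\mathfrak{f}(z+c)$, completing the proof.
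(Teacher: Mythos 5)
Your overall architecture coincides with the paper's: your auxiliary function $H$ is literally the paper's $\psi=\frac{M''}{M'}-\frac{N''}{N'}$ rewritten in terms of $F=1/M$ and $G=1/N$, and the case split ($H\not\equiv 0$ handled by a second-main-theorem count, $H\equiv 0$ handled by integration and the strong-uniqueness property of $Q$) is the same. One piece is genuinely different and cleaner than the paper's: in the subcase $H\equiv 0$, $B\neq 0$ you observe that $A+BG$ must be zero-free, so $\mathfrak{f}(z+c)$ omits at least two finite values because $k\geq 2$ prevents $Q(u)+A/B$ from being a perfect $n$-th power, contradicting Picard's theorem. The paper instead runs two separate second-main-theorem estimates according to whether $Q(0)=-d_0/d_1$, and that is the only place it uses $m_1+m_2>2$ and $n>m_1+m_2$; your route bypasses those hypotheses entirely there. (Correspondingly, your aside that $m_1+m_2$ enters the $H\not\equiv 0$ count through the preimages of $Q(0)$ is misplaced: in the paper that quantity plays no role in the $\psi\not\equiv 0$ case.)

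The substantive gap is that the one step where the proof of this theorem genuinely differs from that of Theorem \ref{th1} --- the quantitative bookkeeping under weak weight $3$ rather than weight $2$ --- is exactly the step you leave as a sketch and yourself flag as ``the main obstacle''. Under weak weight $3$ a zero of $F$ of multiplicity at least $4$ need not be a zero of $G$ at all, so the coincidence of the zero sets of $F$ and $G$ that underlies (\ref{4.3})--(\ref{4.4}) fails; one must bound the unshared part by something like $\overline{N}(r,0;F\mid\geq 4)\leq\frac{1}{3}N(r,0;\mathfrak{f}')\leq\frac{1}{3}T(r,\mathfrak{f})+S(r,\mathfrak{f})$ and then verify that the resulting coefficient still closes under the stated hypothesis $n>2k+2$. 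You name the right ingredients (simple shared zeros controlled by $N(r,0;H)$, the $2k$ terms from the zeros of $Q'$, the truncated high-multiplicity remainder) but do not carry out the computation, so the argument is incomplete at precisely the point where it ceases to be a copy of Theorem \ref{th1}'s proof. In fairness, the paper does no better: its entire proof of this theorem is the sentence that the case is similar to Theorem \ref{th1}, so neither text actually verifies that the constant survives the change of sharing hypothesis.
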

To establish our results, we require the following lemmas:
\begin{lem}\label{3.1} (\cite{03})
Let $\mathfrak{f}(z)$ be a non-constant transcendental meromorphic function with hyper-order less than one, and let $c$ be a non-zero finite complex number. Then $$ m\left(r,\frac{\mathfrak{f}(z+c)}{\mathfrak{f}(z)}\right)=S(r,\mathfrak{f})~~\text{and}~~ m\left(r,\frac{\mathfrak{f}(z)}{\mathfrak{f}(z+c)}\right)=S(r,\mathfrak{f}).$$
\end{lem}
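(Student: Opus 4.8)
The two asserted estimates reduce to a single one. Writing $\mathfrak{g}(z)=\mathfrak{f}(z+c)$, we have $\mathfrak{f}(z)/\mathfrak{f}(z+c)=\mathfrak{g}(z-c)/\mathfrak{g}(z)$ identically, and $T(r,\mathfrak{g})=T(r,\mathfrak{f}(z+c))=(1+o(1))T(r,\mathfrak{f})$ outside a small exceptional set (this shift-invariance of the characteristic is itself a first consequence of $\rho_2(\mathfrak{f})<1$). Hence a bound of the form $m\left(r,\frac{\mathfrak{h}(z+\eta)}{\mathfrak{h}(z)}\right)=S(r,\mathfrak{h})$, valid for an arbitrary transcendental meromorphic $\mathfrak{h}$ with $\rho_2(\mathfrak{h})<1$ and any nonzero $\eta$, yields both displayed statements at once: the first with $\mathfrak{h}=\mathfrak{f}$, $\eta=c$, and the second with $\mathfrak{h}=\mathfrak{g}$, $\eta=-c$ (using $S(r,\mathfrak{g})=S(r,\mathfrak{f})$). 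The plan is therefore to prove this single estimate.

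To bound the proximity function I would begin from a pointwise estimate via the Poisson--Jensen formula. Representing $\log|\mathfrak{f}|$ inside a disc $|w|<R$ with $R>r=|z|$ as the Poisson integral of the boundary values $\log|\mathfrak{f}(Re^{i\phi})|$ plus the Blaschke sums over the zeros $a_\mu$ and poles $b_\nu$ of $\mathfrak{f}$ in $|w|<R$, and subtracting the representation at $z$ from that at $z+c$, one finds $\log^+\left|\frac{\mathfrak{f}(z+c)}{\mathfrak{f}(z)}\right|$ controlled by a difference of two Poisson kernels integrated against $\bigl|\log|\mathfrak{f}(Re^{i\phi})|\bigr|$, together with the finitely many terms $\log|z+c-a_\mu|-\log|z-a_\mu|$ and their reflected analogues for the poles. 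Integrating this over $|z|=r$ turns the boundary contribution into a multiple of $m(R,\mathfrak{f})+m(R,1/\mathfrak{f})\le T(R,\mathfrak{f})+O(1)$, while the zero/pole contributions, after integrating out their local logarithmic singularities and summing, are dominated by $n(R,\mathfrak{f})+n(R,1/\mathfrak{f})$, hence by $T(R',\mathfrak{f})$ for a slightly larger $R'$ through $n(R)\le N(R')/\log(R'/R)$. The result is an inequality of the shape
$$ m\left(r,\frac{\mathfrak{f}(z+c)}{\mathfrak{f}(z)}\right)\le C\,\psi(r,R,R')\,T(R',\mathfrak{f}), $$
where $\psi$ is a product of negative powers of the radial gaps $R-r$ and $R'-R$ with mild polynomial and logarithmic factors.

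The decisive step is the choice of radii. For a function of finite order the crude $R'\asymp R\asymp 2r$ already succeeds, since $T(2r,\mathfrak{f})=O(T(r,\mathfrak{f}))$; under the weaker hypothesis $\rho_2(\mathfrak{f})<1$ this doubling is fatal, because $T$ may grow like $\exp(r^{\rho_2})$. I would instead invoke a Borel-type growth lemma: fixing $\zeta$ with $\rho_2(\mathfrak{f})<\zeta<1$, so that $\log T(r,\mathfrak{f})\le r^{\zeta}$ for large $r$, one can choose an increment $s(r)=o(r^{1-\zeta})$ for which $T(r+s(r),\mathfrak{f})=(1+o(1))T(r,\mathfrak{f})$ holds outside an exceptional set of finite logarithmic measure. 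Taking $R-r$ and $R'-R$ comparable to such an $s(r)$ keeps $T(R',\mathfrak{f})=(1+o(1))T(r,\mathfrak{f})$ while simultaneously driving $\psi(r,R,R')\to 0$, so the right-hand side above is $o(T(r,\mathfrak{f}))=S(r,\mathfrak{f})$.

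The main obstacle is exactly this balancing across the full range $0\le\rho_2(\mathfrak{f})<1$. The admissible increment is only $s(r)=o(r^{1-\rho_2})$, so the integrated estimate must be arranged to depend on the gap $R-r$ essentially \emph{linearly}: the most naive kernel bound scales like $\frac{R}{(R-r)^2}$, which forces $R-r\gg\sqrt r$ and hence reaches only $\rho_2<\tfrac12$, whereas the sharp Gol'dberg--Grinshtein form of the kernel and counting estimates keeps the dependence of order $\frac{1}{R-r}$ (up to logarithmic factors) and thereby covers all $\rho_2<1$. A secondary point requiring care is the bookkeeping of exceptional sets: each use of the growth lemma and of the shift-invariance of $T$ contributes a set of finite logarithmic measure, and one must verify these accumulate to a set still compatible with the symbol $S(r,\mathfrak{f})$ before transferring the bound from $\mathfrak{f}$ to $\mathfrak{g}$ via the identity of the first paragraph.
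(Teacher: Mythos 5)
Your proposal is correct and follows essentially the same route as the source the paper relies on: the paper gives no proof of this lemma, quoting it from \cite{03}, and the actual argument for the hyper-order $<1$ form (due to Halburd, Korhonen and Tohge, refining the finite-order result of Chiang--Feng) is exactly your combination of a Poisson--Jensen pointwise estimate with Gol'dberg--Grinshtein-type kernel and counting bounds and a Borel-type growth lemma allowing increments $s(r)=o(r^{1-\zeta})$ outside a set of finite logarithmic measure. Your observation that the naive $R/(R-r)^2$ kernel scaling only reaches hyper-order $<\tfrac{1}{2}$, and that the essentially linear dependence on $(R-r)^{-1}$ is what covers the full range $\rho_2(\mathfrak{f})<1$, is precisely the decisive point of that proof.
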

\begin{lem}\label{3.2}(\cite{20}, Lemma 6)
Let $\mathfrak{f}(z)$ be a meromorphic function with hyper-order less than one, and let $c$ be a non-zero finite complex number. Then
$$T(r,\mathfrak{f}(z+c)) =T(r,\mathfrak{f})+ S(r,\mathfrak{f})~~\text{and}~~N(r,\mathfrak{f}(z+c)) =N(r,\mathfrak{f})+ S(r,\mathfrak{f}).$$
\end{lem}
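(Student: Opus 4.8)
The plan is to establish the two asserted identities by treating the proximity function $m(r,\mathfrak{f})$ and the integrated pole–counting function $N(r,\mathfrak{f})$ separately, and then to read off the statement for $T(r,\mathfrak{f})$ from the decomposition $T=m+N$. The whole argument is driven by Lemma \ref{3.1}, together with the fact that, for a function of hyper-order less than one, shifting the radius by a fixed amount perturbs the growth functions only by an $S(r,\mathfrak{f})$ term.

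I would first dispose of the proximity function. Writing $\mathfrak{f}(z+c)=\mathfrak{f}(z)\cdot\frac{\mathfrak{f}(z+c)}{\mathfrak{f}(z)}$ and using subadditivity of $m$ together with the first estimate of Lemma \ref{3.1},
\[
 m\big(r,\mathfrak{f}(z+c)\big)\le m(r,\mathfrak{f})+m\!\left(r,\frac{\mathfrak{f}(z+c)}{\mathfrak{f}(z)}\right)=m(r,\mathfrak{f})+S(r,\mathfrak{f}).
\]
Exchanging the roles of $\mathfrak{f}(z)$ and $\mathfrak{f}(z+c)$ and using the second estimate of Lemma \ref{3.1} gives the reverse inequality $m(r,\mathfrak{f})\le m\big(r,\mathfrak{f}(z+c)\big)+S(r,\mathfrak{f})$. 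Combining the two yields $m\big(r,\mathfrak{f}(z+c)\big)=m(r,\mathfrak{f})+S(r,\mathfrak{f})$.

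For the counting function I would start from the geometric observation that a pole of $\mathfrak{f}(z+c)$ at $z_0$ is precisely a pole of $\mathfrak{f}$ at $z_0+c$ of the same multiplicity. Hence, for $r>|c|$, the unintegrated counting functions obey the disk–containment bound
\[
 n(r-|c|,\mathfrak{f})\le n\big(r,\mathfrak{f}(z+c)\big)\le n(r+|c|,\mathfrak{f}).
\]
Passing to the integrated functions, the key input is that $N(r,\mathfrak{f})$ is non-decreasing with hyper-order at most $\rho_2(\mathfrak{f})<1$; the same growth mechanism that underlies the difference calculus then gives $N(r\pm|c|,\mathfrak{f})=N(r,\mathfrak{f})+S(r,\mathfrak{f})$. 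Inserting this into the integrated form of the containment bound squeezes $N\big(r,\mathfrak{f}(z+c)\big)$ between $N(r,\mathfrak{f})+S(r,\mathfrak{f})$ on both sides, so that $N\big(r,\mathfrak{f}(z+c)\big)=N(r,\mathfrak{f})+S(r,\mathfrak{f})$. Adding this to the proximity identity and using $T=m+N$ delivers $T\big(r,\mathfrak{f}(z+c)\big)=T(r,\mathfrak{f})+S(r,\mathfrak{f})$.

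The main obstacle is the transfer of the clean pointwise bound on $n$ to the integrated function $N$. Going from $\int_0^r n(t\pm|c|,\mathfrak{f})\,dt/t$ to $N(r\pm|c|,\mathfrak{f})$ introduces a change-of-variable discrepancy of the type $\int^r n(t,\mathfrak{f})\,dt/t^2$, and one must verify that both this tail and the radius shift $r\mapsto r\pm|c|$ contribute only $S(r,\mathfrak{f})$. This is exactly where the hypothesis $\rho_2(\mathfrak{f})<1$ is indispensable: it guarantees that $T(r,\mathfrak{f})$ cannot grow fast enough for a bounded additive change of the radius to become comparable to $T(r,\mathfrak{f})$, so every such discrepancy is absorbed into the error term. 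Controlling these estimates uniformly, outside the usual exceptional set $E$ of finite measure, is the only delicate point; the rest is bookkeeping.
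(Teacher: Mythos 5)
You should first note that the paper itself contains no proof of this lemma: it is quoted verbatim from \cite{20} (Lemma 6 there), which in turn rests on the difference analogues of Chiang--Feng \cite{03} and on a real-analysis lemma of Halburd--Korhonen--Tohge (Trans.\ Amer.\ Math.\ Soc.\ 366 (2014), Lemma 8.3). Measured against that literature proof, your outline follows essentially the same route: the identity $m(r,\mathfrak{f}(z+c))=m(r,\mathfrak{f})+S(r,\mathfrak{f})$ via the two halves of Lemma \ref{3.1} is exactly right, the containment bound $n(r-|c|,\mathfrak{f})\le n\left(r,\mathfrak{f}(z+c)\right)\le n(r+|c|,\mathfrak{f})$ is the standard geometric input, and assembling the conclusion from $T=m+N$ is fine.

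The one genuine gap is the sentence ``the same growth mechanism that underlies the difference calculus then gives $N(r\pm|c|,\mathfrak{f})=N(r,\mathfrak{f})+S(r,\mathfrak{f})$'': this is not a mechanism you have established anywhere --- it \emph{is} the analytic content of the lemma. That a bounded additive shift of the radius is absorbed into $S(r,\mathfrak{f})$ when $\rho_2(\mathfrak{f})<1$ is precisely the Halburd--Korhonen--Tohge lemma: for a nondecreasing continuous $T$ with $\limsup_{r\to\infty}\log\log T(r)/\log r=\varsigma<1$ and $\delta\in(0,1-\varsigma)$, one has $T(r+s)=T(r)+o\left(T(r)/r^{\delta}\right)$ outside a set of finite \emph{logarithmic} measure; its proof is a nontrivial iteration argument (if the increment were too large on a set of infinite logarithmic measure, chaining the increments along $r, r+s, r+2s,\ldots$ would force hyper-order at least $1-\delta$, a contradiction). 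You must either reproduce that argument or cite it explicitly; as written your squeeze has no engine, and no soft argument can supply one, since the statement fails at hyper-order exactly $1$ (take $T(r)=e^{r}$, where $T(r+s)=e^{s}T(r)$). Two smaller points. First, the change-of-variable discrepancy you flag is actually harmless \emph{without} any growth hypothesis: splitting the integral at $\sqrt{r}$ gives $\int_1^r n(t,\mathfrak{f})\,t^{-2}\,dt\le n(\sqrt{r},\mathfrak{f})+N(r,\mathfrak{f})/\sqrt{r}$, and $n(\sqrt{r},\mathfrak{f})\le 2N(r,\mathfrak{f})/\log r$, so the discrepancy is $o(N(r,\mathfrak{f}))$ unconditionally --- the hypothesis $\rho_2(\mathfrak{f})<1$ is needed only for the radius shift itself, and your framing slightly misattributes where it enters. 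Second, the exceptional set produced by the iteration lemma has finite logarithmic measure, which need not have finite Lebesgue measure; this is a mismatch with the paper's stated convention for $E$ that the paper silently inherits from \cite{20}, and your write-up, which promises an exceptional set ``of finite measure,'' would inherit it too.
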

\begin{lem}\label{3.3}\cite{yi}
  Let $f$ be a non-constant meromorphic function. Then $$N(r, 0; f^{(k)})\leq  k\overline{N}(r,\infty; f) + N(r, 0; f) + S(r, f).$$
\end{lem}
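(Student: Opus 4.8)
The plan is to translate the set-sharing condition into value-$0$ sharing for the compositions $F=Q(\mathfrak f)$ and $G=Q(\mathfrak f(z+c))$, run the standard Lahiri-type auxiliary-function dichotomy, and use the \emph{strong uniqueness} of $Q$ to upgrade a functional identity to $\mathfrak f\equiv\mathfrak f(z+c)$. Write $g(z)=\mathfrak f(z+c)$. Since the $a_i$ are simple roots of $Q$ we have $Q'(a_i)\neq0$, so an $a_i$-point of $\mathfrak f$ of multiplicity $p$ is a zero of $F$ of the same multiplicity $p$; hence $\mathfrak f$ and $g$ sharing $S$ with weak weight $3$ is equivalent to $E_{3)}(0;F)=E_{3)}(0;G)$. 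Because $\rho_2(\mathfrak f)<1$, Lemma \ref{3.2} gives $T(r,g)=T(r,\mathfrak f)+S(r,\mathfrak f)$, whence $T(r,F)=T(r,G)+S(r,\mathfrak f)=n\,T(r,\mathfrak f)+S(r,\mathfrak f)$ and $S(r,F)=S(r,G)=S(r,\mathfrak f)$; moreover Lemma \ref{3.1} ensures that every logarithmic-derivative term produced by the shift is $S(r,\mathfrak f)$.

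Next I would introduce $H=\left(\frac{F''}{F'}-\frac{2F'}{F}\right)-\left(\frac{G''}{G'}-\frac{2G'}{G}\right)$ and split the argument. If $H\equiv0$, two successive integrations yield a bilinear relation $\frac1F=\frac AG-B$ with constants $A\neq0$ and $B$. When $B=0$ this reads $Q(\mathfrak f)\equiv\frac1A\,Q(g)$, and since $Q$ is a strong uniqueness polynomial we immediately get $\mathfrak f\equiv g$, that is $\mathfrak f(z)\equiv\mathfrak f(z+c)$. When $B\neq0$, the entireness of $F$ forces $g$ to omit every root of $Q(w)=A/B$; by Picard's theorem this fiber contains at most one distinct point, so $Q(w)-A/B=(w-\beta)^n$ and therefore $Q'(w)=n(w-\beta)^{n-1}$ has a single distinct zero, contradicting $k\ge2$. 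Thus in the case $H\equiv0$ the theorem follows.

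The substance is the case $H\not\equiv0$, which I expect to be the main obstacle. Here the logarithmic-derivative lemma together with Lemma \ref{3.1} gives $m(r,H)=S(r,\mathfrak f)$. Weak weight $3$ guarantees that every common zero of $F$ and $G$ of multiplicity $\le3$ is cancelled in $H$, while a Laurent expansion shows that each common \emph{simple} zero of $F,G$ is a zero of $H$; consequently $N(r,0;F\mid=1)\le N(r,0;H)\le T(r,H)+O(1)=N(r,\infty;H)+S(r,\mathfrak f)$. The poles of $H$ arise only from mismatched multiple zeros (necessarily of multiplicity $\ge4$ under weak weight $3$) and from zeros of $F'$ and $G'$ lying off the respective zero sets. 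Writing $F'=Q'(\mathfrak f)\,\mathfrak f'$ and using that $Q'$ has exactly $k$ distinct zeros $b_1,\dots,b_k$, these off-set zeros are controlled by $\sum_{j}\overline N(r,b_j;\mathfrak f)+\sum_j\overline N(r,b_j;g)$ together with the free zeros of $\mathfrak f'$ and $g'$, the latter bounded through Lemma \ref{3.3}.

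Finally I would feed these estimates into the ramified second main theorem for $\mathfrak f$, namely $(n-1)\,T(r,\mathfrak f)\le\overline N(r,0;F)-N_0(r,0;\mathfrak f')+S(r,\mathfrak f)$, and split $\overline N(r,0;F)$ into its simple part (bounded above by $N(r,0;F\mid=1)\le N(r,\infty;H)+S(r,\mathfrak f)$) and its multiple part (absorbed by the ramification term $N_0$). The critical-point contributions should then collapse to an inequality of the shape $(n-1)T(r,\mathfrak f)\le(2k+2)\,T(r,\mathfrak f)+S(r,\mathfrak f)$, contradicting $n>2k+2$; a companion estimate anchored at the value $Q(0)$—whose fiber $Q(z)=Q(0)$ has $m_1+m_2$ distinct points, $m_2$ of them ramified—should yield the complementary bound $n\le m_1+m_2$, contradicting $n>m_1+m_2$, the hypothesis $m_1+m_2>2$ being precisely what keeps this input nondegenerate. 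Hence $H\not\equiv0$ is impossible, so $H\equiv0$ and $\mathfrak f(z)\equiv\mathfrak f(z+c)$. The genuinely delicate point will be the bookkeeping that isolates exactly the constant $2k+2$ (and, in parallel, $m_1+m_2$) from the pole count of $H$ and the weighted multiplicities, while keeping every shift-error term inside $S(r,\mathfrak f)$ via the hyper-order hypothesis.
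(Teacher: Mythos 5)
You have proved the wrong statement. The lemma you were asked to establish is Yi's elementary counting inequality $N(r,0;f^{(k)})\leq k\overline{N}(r,\infty;f)+N(r,0;f)+S(r,f)$ for an arbitrary non-constant meromorphic function $f$ — a general fact of value distribution theory in which no shift, no set $S$, no polynomial $Q$, and no sharing hypothesis appears. Your proposal instead sketches a proof of Theorem \ref{th2} of the paper (periodicity under weak weight $3$ sharing), complete with the auxiliary function $H$, the dichotomy $H\equiv 0$ versus $H\not\equiv 0$, and the thresholds $2k+2$ and $m_1+m_2$. Worse, in your third paragraph you explicitly invoke Lemma \ref{3.3} to bound ``the free zeros of $\mathfrak f'$ and $g'$'' — so read as a purported proof of Lemma \ref{3.3} itself, your argument is circular, and read as written it simply never addresses the inequality in the statement.

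For the record, the lemma is a short classical exercise, which is why the paper quotes it from Yi \cite{yi} without proof. The standard argument: for $k=1$, split the zeros of $f'$ into those at zeros of $f$ (a zero of $f$ of order $q$ gives a zero of $f'$ of order $q-1$, so these contribute at most $N(r,0;f)-\overline{N}(r,0;f)$) and those where $f\neq 0,\infty$, which are zeros of $f'/f$; since $N(r,0;f'/f)\leq T(r,f'/f)+O(1)=N(r,\infty;f'/f)+m(r,f'/f)+O(1)\leq \overline{N}(r,0;f)+\overline{N}(r,\infty;f)+S(r,f)$ (poles of $f'/f$ are simple and lie at the zeros and poles of $f$, and $m(r,f'/f)=S(r,f)$ by the lemma on the logarithmic derivative), adding the two contributions gives $N(r,0;f')\leq N(r,0;f)+\overline{N}(r,\infty;f)+S(r,f)$. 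One then inducts on $k$, applying the case $k=1$ to $f^{(k-1)}$ and using $\overline{N}(r,\infty;f^{(k-1)})=\overline{N}(r,\infty;f)$ together with $S(r,f^{(k-1)})=S(r,f)$, which telescopes to the factor $k$ in front of $\overline{N}(r,\infty;f)$. Note also that the naive one-step route via $N(r,0;f^{(k)})\leq N(r,0;f^{(k)}/f)+N(r,0;f)$ does \emph{not} work directly, because $f^{(k)}/f$ has poles at the zeros of $f$ as well, which would pollute the bound; the induction is what avoids this.
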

\begin{proof}[\textbf{Proof of the Theorem \ref{th1}}]

Let $\mathfrak{f}$ and $\mathfrak{f}(z+c)$ be two transcendental entire functions share the set $S=\{a_1,a_2,\ldots,a_n\}$ with weight $2$. Given that $Q(z)=(z-a_1)(z-a_2)\ldots(z-a_n)$, where $a_i\neq a_j$, $1\leq i,j\leq n$. Now, we define  $$ M(z):= \frac{1}{Q(\mathfrak{f}(z))}$$ and $$ N(z):= \frac{1}{Q(\mathfrak{f}(z+c))}.$$
Let $S(r)$ be any function $S: (0,\infty)\rightarrow \mathbb{R}$ satisfying $S(r)=o(T(r,\mathfrak{f})+T(r,\mathfrak{f}(z+c)))$ for $r\rightarrow \infty$ outside a set of finite Lebesgue measure. Next we consider the function $$\psi(z):= \frac{M''(z)}{M'(z)}- \frac{N''(z)}{N'(z)}.$$
First, we assume $\psi\not\equiv 0$.  Since $\psi(z)$ can be written as $$ \psi(z)=\frac{N'(z)}{M'(z)}   \left(\frac{M'(z)}{N'(z)}\right)',$$ so, all poles of $\psi$ are simple. Thus
\begin{equation} \label{4.1}
 N(r,\infty;M|=1)= N(r,\infty;N|=1)\leq N(r,0; \psi).
\end{equation}
Now by the first fundamental theorem and the lemma of logarithmic derivative, and Lemmas \ref{3.1} and \ref{3.2}, (\ref{4.1}) can be written as
\begin{equation} \label{4.2}
N(r,\infty;M|=1)= N(r,\infty;N|=1)\leq N(r,\infty; \psi) +S(r)
\end{equation}
Now, we observe that the possible poles of $\psi$ can come from\\ (i) Poles of $M$ and $N$. \\ (ii) Zeros of $M'$ and $N'$.\\
Also,
$$ M'(Z)=- \frac{\mathfrak{f}'(z)Q'(\mathfrak{f}(z))}{(Q(\mathfrak{f}(z)))^2}, N'(Z)=- \frac{\mathfrak{f}'(z+c)Q'(\mathfrak{f}(z+c))}{(Q(\mathfrak{f}(z+c)))^2}.$$
Assume that $\{b_1, b_2, \ldots, b_k\}$ be the $k$-distinct zeros of $Q'(z)$. Since $\mathfrak{f}(z)$ and $\mathfrak{f}(z+c)$ share the set $S$ with weight $2$, so, we can write
\begin{eqnarray} \label{4.3}
\nonumber  N(r,\infty; \psi) &\leq& \sum_{j=1}^{k}\left( \overline{N}(r,b_j;\mathfrak{f}(z))+ \overline{N}(r,b_j;\mathfrak{f}(z+c))\right)+ \overline{N}_{0}(r,0;\mathfrak{f'}(z))+ \overline{N}_{0}(r,0;\mathfrak{f'}(z+c))\\
&& +  \overline{N}(r,\infty;\mathfrak{f}(z))+ \overline{N}(r,\infty;\mathfrak{f}(z+c))+ \overline{N}_{*}(r,\infty;M, N),
\end{eqnarray}
where $\overline{N}_{0}(r,0;\mathfrak{f'}(z))$ denotes the reduced counting function of zeros of $\mathfrak{f'}$, which are not zeros of $ \Pi_{i=1}^{n} (\mathfrak{f}-a_i)\Pi_{j=1}^{k} (\mathfrak{f}-b_j)$. Similarly, $\overline{N}_{0}(r,0;\mathfrak{f'}(z+c))$ is defined. Since $\mathfrak{f}(z)$ and $\mathfrak{f}(z+c)$ share $S$ with weight $2$, so,
\begin{eqnarray} \label{4.4}
&& \nonumber \overline{N}(r,\infty;M|\geq2)+ \overline{N}_{0}(r,0;\mathfrak{f'}(z+c))+\overline{N}_{*}(r,\infty; M, N)\\
 \nonumber & \leq & \overline{N}_{0}(r,0;Q(\mathfrak{f}(z+c))|\geq2)+\overline{N}_{0}(r,0;\mathfrak{f'}(z+c))+ \overline{N}_{0}(r,0;Q(\mathfrak{f}(z+c))|\geq3)\\
\nonumber & \leq & N(r,0;\mathfrak{f'}(z+c))\\
& \leq & N(r,0;\mathfrak{f}(z+c))+S(r,\mathfrak{f}(z+c)).
\end{eqnarray}
Now, by the 2nd fundamental theorem, Lemma \ref{3.2} and from (\ref{4.2}), (\ref{4.3}) and (\ref{4.4}), we have
\begin{eqnarray} \label{4.5}
\nonumber && (n+k-1)T(r,\mathfrak{f})\\
\nonumber & \leq & \overline{N}(r,\infty;\mathfrak{f}(z)) + \overline{N}(r,0;Q(\mathfrak{f}))+\sum_{j=1}^{k}\left( \overline{N}(r,b_j;\mathfrak{f}(z)\right) -\overline{N}_{0}(r,0;\mathfrak{f'}(z))+S(r,\mathfrak{f})\\
\nonumber & \leq & \overline{N}(r,\infty;M\geq 2)+N(r,\infty;\psi)+\sum_{j=1}^{k}\left( \overline{N}(r,b_j;\mathfrak{f}(z)\right) -\overline{N}_{0}(r,0;\mathfrak{f'}(z))+S(r,\mathfrak{f})\\
\nonumber & \leq &  2\sum_{j=1}^{k}\left( \overline{N}(r,b_j;\mathfrak{f}(z)\right)+\sum_{j=1}^{k}\left( \overline{N}(r,b_j;\mathfrak{f}(z+c)\right)+\overline{N}_{0}(r,0;\mathfrak{f'}(z+c))\\
\nonumber&&+ \overline{N}_{*}(r,\infty; M, N)+ \overline{N}(r,\infty;M\geq 2)+S(r,\mathfrak{f})\\
\nonumber & \leq & 2k T(r,\mathfrak{f}(z)) + k T(r,\mathfrak{f+c})+ N(r,0; \mathfrak{f}(z+c))+ S(r,\mathfrak{f})\\
\nonumber& \leq & (3k+1)T(r,\mathfrak{f}) +S(r,\mathfrak{f}),
\end{eqnarray}
which is impossible as $ n>2k+2$. Therefore, $\psi\equiv 0$. Then by integration, we have $$\frac{1}{Q(\mathfrak{f}(z))}\equiv \frac{d_0}{Q(\mathfrak{f}(z+c))}+d_1.$$
Here, we consider two cases.\\
\textbf{Case 1} Assume that $d_1\neq 0$. Then above expression can be written as
$$Q(\mathfrak{f}(z))\equiv \frac{Q(\mathfrak{f}(z+c))}{d_0+ d_1Q(\mathfrak{f}(z+c))}.$$
Thus $$\overline{N}\left(r,-\frac{d_0}{d_1};Q(\mathfrak{f}(z+c))\right) \leq \overline{N}(r, \infty ;Q(\mathfrak{f}(z)))= \overline{N}(r, \infty ;\mathfrak{f}(z)).$$
Since $Q(z)-Q(0)$ has $m_1$ simple zeros and $m_2$ multiple zeros, so we can assume that $$Q(z)-Q(0) = (z-b_1)(z-b_2)\ldots (z-b_{m_1})(z-c_1)^{l_1}(z-c_2)^{l_2}\ldots (z-c_{m_2})^{l_{m_2}},$$ where $l_i \geq 2$ for $1\leq i\leq m_2$. Moreover $l_i< n$ as $Q'(z)$ has at least two zeros.\par
If $Q(0)\neq -\frac{d_0}{d_1}$, then by the first and second fundamental theorems, we have
\begin{eqnarray*}
 nT(r,\mathfrak{f}(z+c)) +O(1) & = & T(r,Q(\mathfrak{f}(z+c))) \\
 & \leq & \overline{N}\left(r, \infty ;Q(\mathfrak{f}(z+c))\right)+ \overline{N}\left(r, Q(0);Q(\mathfrak{f}(z+c))\right)\\
 & + &\overline{N}\left(r,-\frac{d_0}{d_1};Q(\mathfrak{f}(z+c))\right)+S(r,\mathfrak{f}(z+c))\\
 & \leq & (m_1+m_2)T(r,\mathfrak{f}(z+c))+S(r,\mathfrak{f}(z+c)),
\end{eqnarray*}
which is impossible as $n>m_1+m_2$.\\ If $Q(0)= -\frac{d_0}{d_1}$, then
$$Q(\mathfrak{f}(z))\equiv \frac{Q(\mathfrak{f}(z+c))}{ d_1\left(Q(\mathfrak{f}(z+c))-Q(0)\right)}.$$
Thus every zero of $\mathfrak{f}(z+c)-b_j$ $(1\leq j\leq m_1)$ has multiplicity at least $n$, and every zero of $\mathfrak{f}(z+c)-c_i$ $(1\leq i\leq m_2)$ has multiplicity at least $2$.\\
Thus by the second fundamental theorem, we have
\begin{eqnarray*}
&& (m_1+m_2-1)T(r,\mathfrak{f}(z+c))\\
  &\leq & \overline{N}\left(r, \infty ;\mathfrak{f}(z+c)\right)+ \sum_{j=1}^{m_1}\overline{N}\left(r, b_j ;\mathfrak{f}(z+c)\right)+ \sum_{i=1}^{m_2}\overline{N}\left(r, c_i ;\mathfrak{f}(z+c)\right)+ S(r,\mathfrak{f}(z+c))\\
& \leq & \frac{1}{n} \sum_{j=1}^{m_1}N\left(r, b_j ;\mathfrak{f}(z+c)\right)+ \frac{1}{2}\sum_{i=1}^{m_2}N\left(r, c_i ;\mathfrak{f}(z+c)\right)+ S(r,\mathfrak{f}(z+c))\\
&\leq & \left(\frac{m_1}{2}+\frac{m_2}{2}\right)T(r, \mathfrak{f}(z+c))+S(r, \mathfrak{f}(z+c)),
\end{eqnarray*}
which is not possible.\medbreak
\textbf{Case 2} Thus we assume that $d_1 = 0$. Then $ Q(\mathfrak{f}(z)) \equiv d_0 Q(\mathfrak{f}(z+c))$. Since $Q$ is strong uniqueness polynomial, thus
 $$\mathfrak{f}(z) \equiv \mathfrak{f}(z+c).$$
Hence the theorem is proved.
\end{proof}
\begin{proof}[\textbf{Proof of the Theorem \ref{th2}}]
This case is similar of Theorem \ref{th1}. Thus we omit the details.
\section{Compliance with Ethical Standards}

\begin{enumerate}
\item[•] Conflicts of Interest: The authors declare that they have no conflict of interest.
\item[•] Data Availability Statement: Data Sharing is not applicable to this article.
\item[•] Authors received no specific grant/funding for the research, authorship or publication
of this article.
\item[•] This article does not contain any studies with human participants or animals
performed by any of the authors.
\end{enumerate}

\end{proof}

\end{document}